\documentclass[reqno]{amsart}
\usepackage[utf8]{inputenc}
\usepackage[english]{babel}
\usepackage{amsmath}
\usepackage{amsthm}
\usepackage{amssymb}
\usepackage{mathtools}
\usepackage{enumitem}
\usepackage{xcolor}
\usepackage[centering]{geometry}
\usepackage{pgfplots}
\usepackage{tikz}
\usepackage{tikzrput}
\usetikzlibrary{calc}
\usetikzlibrary{positioning}
\usetikzlibrary{decorations,decorations.markings}
\usetikzlibrary{decorations.pathreplacing,shapes.misc}
\usetikzlibrary{arrows}

\usepackage{fp}  		
\usepackage{pgfplots}
\usetikzlibrary{math} 
\usetikzlibrary{angles,quotes}
\definecolor{colQ}{rgb}{0.8, 0.8, 0.8} 

\usepackage{fancyhdr}
\usepackage{soul}  

\numberwithin{equation}{section}

\usepackage[hypertexnames=false, breaklinks,
   colorlinks=true, 
   citecolor=red!70!black,
   linkcolor=blue,
   anchorcolor=cyan,
   urlcolor=green!50!black,
   pagebackref=false,
   ]{hyperref}

\colorlet{mycyan}{cyan!20}
\colorlet{mymagenta}{magenta!30}
\colorlet{myorange}{orange!40}

\usepackage[color=yellow!30, linecolor=orange, colorinlistoftodos]{todonotes}
\usepackage[breakable]{tcolorbox}
\tcbuselibrary{theorems}
\newtcolorbox{bluebox}[1][]%
{left=0mm, right=0mm, bottom=0mm, top=0mm, sharp corners, boxrule=.8pt, before skip=\topsep, after skip=\topsep, colback=cyan!5, colframe=cyan, coltitle=black, fonttitle=\bfseries, title=#1, breakable}
\tcbset{tcbox raise base, sharp corners, left=0mm,right=0mm,top=0mm,bottom=0mm,nobeforeafter, boxrule =.5pt} 
\tcbset{tcbox raise base, sharp corners, left=0mm,right=0mm,top=0mm,bottom=0mm,nobeforeafter, boxrule =.5pt}


\theoremstyle{definition}
\newtheorem{theorem}{Theorem}[section]
\newtheorem{defi}[theorem]{Definition}
\newtheorem{exam}[theorem]{Example}
\newtheorem{remark}[theorem]{Remark}
\newtheorem*{remark*}{Remark}

\newtheorem{lemma}[theorem]{Lemma}
\usepackage{mathrsfs}
\newtheorem{corollary}[theorem]{Corollary}

\newcommand{\C}{\mathbb{C}}
\newcommand{\N}{\mathbb{N}}
\newcommand{\R}{\mathbb{R}}
\newcommand{\Z}{\mathbb{Z}}

\newcommand{\snull}{\sigma}

\DeclareMathOperator{\ACloc}{AC_{loc}}
\DeclareMathOperator{\re}{Re}
\DeclareMathOperator{\im}{Im}

\DeclareMathOperator{\cco}{\overline{co}}
\DeclareMathOperator{\interior}{Int}
\newcommand{\rd}{\mathrm d} 
\def\PT{{\mathcal P}{\mathcal T}}
\def\cP{{\mathcal P}}      
   \def\cT{{\mathcal T}}

\newcommand{\define}[1]{\textbf{#1}}
\pgfplotsset{compat=newest}

\begin{document}

\title[Limit point/limit circle for Sturm-Liouville problems with complex potentials]{Limit point and limit circle trichotomy for Sturm-Liouville problems with complex potentials}

\author[F.\ Leben]{Florian Leben}
\address{Institut f\"{u}r Mathematik\\
Technische Universit\"at Ilmenau\\ Postfach 100565\\
98648 Ilmenau\\ Germany}
\email{leben.florian@gmx.de}

\author[E.\ Leguizam\'on]{Edison Leguizam\'on}
\address{Departamento de Matem\'aticas\\
Universidad de Los Andes\\
Cra.\ 1a No 18A-70\\
111711 Bogotá\\
Colombia}
\email{ej.leguizamon@uniandes.edu.co}
\urladdr{\url{https://academia.uniandes.edu.co/AcademyCv/ej.leguizamon?isPhdStudent=true&hideNavBar=false}}

\author[C.\ Trunk]{Carsten Trunk}
\address{Institut f\"{u}r Mathematik\\
Technische Universit\"at Ilmenau\\ Postfach 100565\\
98648 Ilmenau\\ Germany}
\email{carsten.trunk@tu-ilmenau.de}
\urladdr{\url{https://www.tu-ilmenau.de/de/analysis/team/carsten-trunk/}}

\author[M.\ Winklmeier]{Monika Winklmeier}
\address{Departamento de Matem\'aticas\\
Universidad de Los Andes\\
Cra.\ 1a No 18A-70\\
111711 Bogotá\\
Colombia}
\email{mwinklme@uniandes.edu.co}
\urladdr{\url{https://math.uniandes.edu.co/~mwinklme/index.php}}

\keywords{Sturm--Liouville problem, limit point, limit circle, complex coefficients}
\subjclass{34E05; 34M10}
\date{\today}

\maketitle
\begin{abstract}
The limit point and limit circle classification of real Sturm-Liouville problems by H.~Weyl more than 100 years ago was extended by A.R.\ Sims around 60 years ago to the case when the coefficients are complex.
Here, the main result is a collection of various criteria which allow us to decide to which class of Sims' scheme a given Sturm-Liouville problem with complex coefficients belongs.
This is subsequently applied to a second order differential equation
defined on a ray in $\mathbb C$ which is motivated by the recent intensive research connected with $\mathcal P \mathcal T$-symmetric Hamiltonians.
\end{abstract}

\section{Introduction}

The search for criteria that guarantee that a Sturm-Liouville equation with real coefficients is in the limit point or the limit circle case  has a long tradition since the seminal paper of  H.~Weyl \cite{weyl} in 1910
where he classifies Sturm-Liouville problems on an interval $(a,b)$ with a regular end point $a$ into two classes:
Either all solutions of the eigenvalue problem are square-integrable close to $b$ (limit circle) 
or there exists at least one solution without this property (limit point).
This behaviour is independent of the chosen eigenvalue parameter.


Sturm-Liouville
problems with \textit{complex-valued} coefficients were investigated 
in the (also seminal) paper by A.R.~Sims in 1957 \cite{SIMS}
with further refinements in
\cite{MC} and \cite{QiSun,QiSunZheng}. The classification 
proposed by A.R.~Sims and later Brown et al. in \cite{MC}
contains three different cases, where 
one takes into account also the behaviour of the solution and its derivative in a weighted $L^2$-space (for details we refer to Section \ref{Samarkand}).

Our interest in the classification proposed by A.R.~Sims
arises from a second order differential equation defined on a 
ray in $\mathbb C$.  This is motivated
by recent intensive research connected with $\mathcal P \mathcal T$-symmetric
Hamiltonians, cf.\ \cite{bender}.
In the seminal paper by C.M.~Bender
and S.~Boettcher \cite{bender} a new view of quantum mechanics was proposed
which adopts all its axioms except the one that restricts the Hamiltonian to be Hermitian,  relaxing it to the assumption that the Hamiltonian is $\cP\cT$-symmetric. Here, $\mathcal P$ is parity and $\mathcal T$ is time reversal.
 Since then, $\cP\cT$-symmetric Hamiltonians have been intensively analysed by many authors.
In \cite{M02} $\cP\cT$-symmetry was embedded into a more general mathematical framework: pseudo-Hermiticity or, equivalently,
self-adjoint operators in  Krein spaces, \cite{AT12,HK,LT04,LebTr}.
For a general introduction to $\cP\cT$-symmetric quantum mechanics we refer
to \cite{M10} and \cite{BenderBook}.
\smallskip

Let $\Gamma$ be a ray in the complex plane with angle $\phi \in (-\pi/2,\pi/2)$,
\begin{equation*}
\Gamma:=\{z\in\C:z=xe^{i\phi}, x\in [a,\infty)\}
\end{equation*}
for some $a\geq 0$.
Our main interest is to obtain a Weyl criterion for the differential equation
\begin{equation}
\label{ equation 1-gamma}
-y(z)''+ \textbf{q}(z)y(z)=\lambda y(z),
\qquad z\in \Gamma,
\end{equation}
where 
$\textbf{q}:\Gamma \to \C$ is locally integrable.

A prominent class of potentials consists of the $\PT$-symmetric potentials
$$
\textbf{q}(z):= - (iz)^{N+2}
$$
where $N$ is a positive integer \cite{bender,BBJ02}.
Other Hamilitonians can be found for instance in \cite{CCS08,M10}.

Via a parametrization  \eqref{ equation 1-gamma} can be mapped back to the real line leading to a Sturm-Liouville problem on an interval of the form
\begin{equation}\label{weltmeister}
    -\big( py' \big)'+qy=w\lambda y.
\end{equation}
We give an asymptotic approximation for its solutions in Section~\ref{Tash}
via an approach based on \cite{east}.
A careful analysis of these asymptotic approximations leads
to new criteria for limit point/limit circle cases in the sense of
A.R.~Sims for the equation in \eqref{weltmeister} 
(see Section \ref{usbek})
and then,
by parameterisation, also for \eqref{ equation 1-gamma},
see Section \ref{Russisch}.

\medskip

\textbf{Notations.}
For $-\infty < a < b \le \infty$ we denote by $\ACloc(a,b)$ the set of locally absolutely continuous functions on each compact subinterval of $(a,b)$.
For a locally integrable function $w:(a,b)\to\C$ we set
$L_w^2(a,b) := \{ f: (a,b) \to \C : f \text{ measurable},\ \int_a^b |f(x)|^2 |w(x)|\,\rd x < \infty \}$.
If $w = 1$, then we write $L^2(a,b)$.
Recall that the normed space of uniformly locally integrable functions $  L^1_{\mathrm u}(a,b)$ is defined as
\begin{align*}
    L^1_{\mathrm u}(a,b)=\left\{f\in L^1_{\mathrm{loc}}(a,b) : \sup_{n\in\mathbb Z} \int_{[n,n+1]\cap(a,b)} |f(t)|\,\rd t <\infty\right\}.
\end{align*}


\section{Weyl's alternative for complex Sturm-Liouville problems} \label{Samarkand}
Consider the Sturm-Liouville problem
\begin{equation}
\label{1}
    -\big( p(x)y'(x) \big)'+q(x)y(x)=w(x)\lambda y(x),\quad x\in [a,b)
\end{equation}
where $a\in \R$, $b\in \R\cup \{\infty\},$ $\lambda\in \C$ 
and $w,\, 1/p,\, q:[a,b) \to \C$ are locally integrable in $[a,b)$ and satisfy
$w(x)>0$, $p(x)\neq 0$  for a.a.\ $x\in[a,b)$. 
Here we always assume that the end point $a$ is regular and $b$ is singular (which is indicated by writing $x\in [a,b)$
or $L^{1}_{\textnormal{loc}}[a,b)$).
A \define{solution} for \eqref{1} is a function $y$ such that
$y,\, py'\in\ACloc(a,b)$ and $y$ satisfies \eqref{1} for a.a.\ $x\in[a,b)$.
\smallskip

Given $A\subset \C$, we define $\cco(A)$ as the closed convex hull of $A$.
We impose
\begin{equation}
\label{1.2}
    Q:=\cco\left\{\frac{q(x)}{w(x)}+rp(x)\ :\ 0<r<\infty,\ x\in[a,b)\right\}\neq \C.
\end{equation}
Let $\lambda \notin Q$.
Then there exists a unique point $K\in Q$ which minimizes the distance between $\lambda$  and $Q$.
Moreover, there exists an angle $\theta$ such that
\begin{equation}
   \label{condition}
   \re [e^{i\theta}\left(z-K\right)]\geq 0\quad \mbox{for all }z\in Q.
\end{equation}
In fact, let  $Q-K:=\{z\in\C:z=w-K, w\in Q\}$.
Since the set $\interior(Q-K) = \interior(Q)-K$ is open and convex and does not intersect the trivial subspace $\{0\}$, the geometric form of the Hahn-Banach Theorem
(see e.g. \cite[Theorem~7.7.4]{naricibeckenstein})
shows that there is a linear functional
$f:\C\to\C$ such that $\re[f(z)]>0$ for all $z\in\interior(Q)-K$.
Without restriction we may assume that $f$ is normalized.
Hence there exists a  real number $\theta$ such that
$f(z)=e^{i\theta}z$ for all $z\in\C$ and $\re[e^{i\theta}v]>0$ for all $v\in \interior(Q)-K$.
\smallskip

For $K\in\C$ and $\theta\in\R$ we define the open half-plane
\[\
\Lambda_{K,\theta}:=\left\{z\in \C: \re [e^{i\theta}\left(z-K\right)]< 0\right\},
\]
and the set
\[S:=\left\{(\theta,K):\eqref{condition} \mbox{ is satisfied}\right\}.\]

In \cite{MC} it is proved that the Sturm-Liouville problem \eqref{1} falls in exactly one of the cases of the next definition.

\begin{defi}
   \label{d1.1}
   Given $(\theta, K)\in S$ and $\lambda\in \Lambda_{K,\theta},$ we have the following cases:
   \begin{enumerate}[label=\upshape(\arabic*)]
      \item\label{limitpointI}
      There is, up to a multiplicative constant, only one solution $y$ of the equation \eqref{1} such that
      \begin{equation}
	 \label{1.3}
	 \int_{a}^{b}\re(e^{i\theta}p)|y'|^{2}\,\rd t+\int_{a}^{b}\re[e^{i\theta}(q-Kw)]|y|^{2}\, \rd t+\int_{a}^{b}w|y|^{2}\, \rd t<\infty
      \end{equation}
      and this is the only solution belonging to
      $L_{w}^{2}(a,b)$.
      In this case we say that \eqref{1} is in the \textbf{limit point I case}.

      \item\label{limitpointII}
      All solutions of \eqref{1} are in $L^{2}_{w}(a,b)$ but, up to a multiplicative constant, there is only one solution that satisfies \eqref{1.3}.
      In this case we say that \eqref{1} is in the \textbf{limit point II case}.

      \item\label{limitcircle} All solutions of \eqref{1} are in $L^{2}_{w}(a,b)$ and all solutions satisfy \eqref{1.3}.
      In this case we say that \eqref{1} is in the \textbf{limit circle case}.
        \end{enumerate}
\end{defi}
\begin{remark}\label{CanNeverLove}
      In the situation of Definition \ref{d1.1} (3) we have
      \begin{align*}
          \re[ e^{i\theta}(q-Kw) ]
          = \re\left[ w e^{i\theta}\left(\frac{q}{w}-K\right) \right]
          = w\re\left[ e^{i\theta}\left(\frac{q}{w}-K\right) \right]
          \ge 0.
      \end{align*}
  Hence the three summands on the left hand side of \eqref{1.3} are nonnegative and therefore, if a solution of \eqref{1} satisfies \eqref{1.3},
  then it is automatically in $L^{2}_{w}(a,b)$.
   \end{remark}

\begin{remark}\label{CanNeverLoveAgain}
   In \cite[Remark 2.2]{MC} the method of variation of parameters is used to deduce that the classification is independent of $\lambda$, that is
   \begin{itemize}
      \item If all solutions are in $L^{2}_w(a,b)$ for some $\lambda_{0}\in \C$, then the same is true for all $\lambda\in \C.$
      \item If all solutions satisfy \eqref{1.3} for some $\lambda_0\in \Lambda_{\theta,K}$, then the same is true for all $\lambda\in \C.$
   \end{itemize}
\end{remark}

Note that $\theta=\frac{\pi}{2}$ in the case of real coefficients.
Hence the first two terms in \eqref{1.3} are zero and therefore the limit point case II is not possible for real Sturm-Liouville equations.


\section{Asymptotic approximation of second order differential equations} \label{Tash}
   
In this section we derive an asymptotic approximation of solutions of \eqref{2.1}.
Since \eqref{ equation 1-gamma} can be transformed into such an equation, this will allow us to establish limit point/limit circle criteria in Section~\ref{Russisch}.
Our approximations of the solutions are primarily based on \cite[Theorem~1.3.1]{east}.
Consider the differential equation
\begin{equation}
   \label{2.1}
   (py')'(x)=s(x)y(x),\quad x\in [a,b)
\end{equation}
where $s,p:[a,b)\to \C$ are
 functions such that
$\frac{1}{p},s\in L^{1}_{\textnormal{loc}}[a,b)$.
In what follows we define the $n$th root of a complex number $z=re^{i\arg(z)}$ with $-\pi<\arg (z)\leq\pi$ as $z^{1/n}=r^{1/n}e^{i\arg(z)/n}$. Our first theorem is a variation of Theorem 2.5.1 in  \cite{east}. It leads to slightly different assertions which form the basis for the subsequent section.

\begin{theorem}\label{t2.2} 
Assume that $p(x)\neq0$, $s(x)\neq0$, $\arg \frac{s(x)}{p(x)} \neq \pi$, $\arg p(x)s(x)\neq\pi$ for all $x\in[a,b)$ and 
let $u:=(ps)^{-1/4}$.
Assume that $u, pu' \in \ACloc(a,b)$
and
$u(pu')'\in L^{1}(a,b)$. Then there exist $F_{j},\widehat{F}_{j}:[a,b)\to \C$ and a fundamental system $\{y,\widehat{y}\}$ for \eqref{2.1} with
\begin{alignat}{3}
\label{y}
y(x) &= (p(x)s(x))^{-1/4}e^{-\int_{a}^{x}\sqrt{s(t)/p(t)}\, \rd t}(F_{1}(x)+1),
&& \quad x\in[a,b),
\\
\nonumber
((ps)^{1/4}y)'(x) &= (s(x)/p(x))^{1/2}e^{-\int_{a}^{x}\sqrt{s(t)/p(t)}\, \rd t}(F_{2}(x)-1),
&& \quad x\in[a,b),
\\[2ex]
\label{widey}
\widehat{y}(x) &= (p(x)s(x))^{-1/4}e^{\int_{a}^{x}\sqrt{s(t)/p(t)}\, \rd t}(\widehat{F}_{1}(x)+1),
&& \quad x\in[a,b),
\\
\nonumber
((ps)^{1/4}\widehat{y})'(x) &= (s(x)/p(x))^{1/2} e^{\int_{a}^{x}\sqrt{s(t)/p(t)}\, \rd t}(\widehat{F}_{2}(x)  - 1),
&& \quad x\in[a,b),
    \end{alignat}
    and, for $j=1,2,$
    \begin{equation*}
\|F_{j}\|_{\infty},\ \|\widehat{F}_{j}\|_{\infty}\leq 2e^{2M}-2,
\quad\text{and}\quad
F_{j}(x),\ \widehat{F}_{j}(x)\to 0,\quad\mbox{when }x\to b,
    \end{equation*}
    where $\|\cdot\|_{\infty}$ is the supremum norm and $M:=\|u(pu')'\|_{L^{1}}$.
\end{theorem}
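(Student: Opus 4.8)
The plan is to reduce the equation $(py')' = sy$ to a first-order system in a cleverly chosen basis so that the leading behavior is governed by the diagonal matrix with entries $\pm\sqrt{s/p}$, and then apply a Levinson-type (or direct fixed-point/Volterra) argument to control the perturbation. First I would introduce the Liouville-type substitution: set $u = (ps)^{-1/4}$ and use the new dependent variables $v_1 = (ps)^{1/4} y$ and $v_2 = (ps)^{-1/4} p y' \cdot (\text{suitable normalization})$, or more precisely work with the pair $\big((ps)^{1/4}y,\ (s/p)^{-1/2}((ps)^{1/4}y)'\big)$. A short computation — using $y, py' \in \ACloc$, the hypothesis $u, pu' \in \ACloc$, and the sign conditions $\arg(s/p)\neq\pi$, $\arg(ps)\neq\pi$ which guarantee that $\sqrt{s/p}$ and $(ps)^{\pm 1/4}$ are well-defined and locally absolutely continuous — should transform \eqref{2.1} into a system of the form $V' = \big(\Omega(x)\,D + R(x)\big)V$, where $D = \mathrm{diag}(-1,1)$, $\Omega = \sqrt{s/p}$, and the remainder $R$ has entries built from $u(pu')'$, so that $\int_a^b \|R\| \le M < \infty$ by hypothesis.

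Next I would pass to the "interaction picture" by writing $V(x) = E(x) W(x)$ where $E(x) = \mathrm{diag}\big(e^{-\int_a^x\Omega},\ e^{\int_a^x\Omega}\big)$, turning the system into $W' = E^{-1} R E\, W =: \widetilde R(x) W$. The crucial structural point is that the off-diagonal entries of $\widetilde R$ carry exponential factors $e^{\pm 2\int_a^x \Omega}$; for the solution $y$ one tracks the component that stays bounded, and the sign condition on $\arg(s/p)$ ensures $\re\int_a^x\sqrt{s/p}$ does not run off to $-\infty$ in the bad direction. Rather than invoke the full Levinson theorem, I expect the paper follows \cite{east} and sets up a Volterra integral equation for $W - (\text{initial vector})$ on $[x_0, b)$, i.e. $W(x) = e_j + \int_{x}^{b} \text{(kernel)}\, W$ with the kernel dominated pointwise by (a constant times) $|u(pu')'|$; iterating this Volterra equation gives the Neumann series bound $\|W - e_j\|_\infty \le e^{2M} - 1$, hence $\|F_j\|_\infty, \|\widehat F_j\|_\infty \le 2e^{2M} - 2$ after unwinding the factor-of-two normalizations, and dominated convergence (the tail $\int_x^b |u(pu')'| \to 0$) yields $F_j(x), \widehat F_j(x) \to 0$ as $x \to b$.

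The main obstacle is setting up the Volterra equation on the correct half-infinite interval with the correct orientation of integration for each of the two solutions: one must integrate the bounded (decaying-exponential) component forward from $a$ but the potentially growing component backward from $b$, and verify that after the substitution $V = EW$ the resulting kernel is genuinely integrable with the stated constant $M = \|u(pu')'\|_{L^1}$ and no hidden dependence on $\Omega$. This is exactly where the hypotheses $u(pu')' \in L^1(a,b)$ and the two $\arg \neq \pi$ conditions are used, and where the bookkeeping differs slightly from \cite[Theorem 2.5.1]{east} — hence the claim that this is "a variation" yielding "slightly different assertions." Once the kernel bound and the orientation are pinned down, the norm estimate and the limit are routine consequences of the Neumann series and dominated convergence, and translating back through $V \mapsto y$ gives the displayed formulas \eqref{y}–\eqref{widey} for $y$, $\widehat y$ and their quasi-derivatives $(ps)^{1/4}y$, $(ps)^{1/4}\widehat y$.
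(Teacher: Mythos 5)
Your plan follows essentially the same route as the paper: the substitution $u=(ps)^{-1/4}$, passage to a $2\times 2$ first-order system whose diagonal part has entries $\pm\sqrt{s/p}$ and whose perturbation is a constant matrix times $u(pu')'$ (so its $L^1$-norm is $M$), and a Volterra/successive-approximation argument giving the Neumann-series bound of order $e^{M}-1$ together with the tail estimate on $\int_x^b|u(pu')'|$ to conclude $F_j,\widehat F_j\to 0$, all in the spirit of Eastham. The only difference is bookkeeping: rather than the full diagonal conjugation (``interaction picture'') with per-component integration limits that you sketch, the paper renormalizes each solution by a single scalar exponential, so the homogeneous fundamental matrix satisfies $\|\Phi(x)\Phi(t)^{-1}\|=1$ in the relevant direction and both components are integrated backward from $b$ for the decaying solution $y$ and forward from $a$ for the growing solution $\widehat y$, which settles the orientation/kernel-boundedness issue you flag as the main obstacle.
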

\begin{proof}
     Note that $y$ is a solution for $\eqref{2.1}$ if and only if $Y$ is a solution for $Y'(x)=A(x)Y(x)$ where
     \begin{equation}
        \label{y-sistem}
        Y(x):=\begin{pmatrix} y(x) \\ p(x)y'(x)
        \end{pmatrix}
        \quad\mbox{ and }\quad
        A(x):=\begin{pmatrix}
           0 & 1/p(x) \\
           s(x)& 0
        \end{pmatrix} \quad x\in[a,b).
    \end{equation}
    We call $Y$ a solution for $Y'=AY$ if $Y$ is locally absolutely continuous in $[a,b)$ and satisfies the differential equation \eqref{y-sistem} for a.a.\ $x\in[a,b)$.

    For $z(x):=u(x)^{-1}y(x)$ and
    \begin{equation}
        \label{Z}
        Z(x):=\begin{pmatrix} z(x) \\ p(x)u(x)^{2}z'(x)
        \end{pmatrix}
    \end{equation}
    we obtain the relations
    \begin{equation}
        \label{z-system}
        Y(x)
        = \begin{pmatrix}
        u(x) &0  \\
        p(x)u'(x) & u^{-1}(x)
	\end{pmatrix}Z(x)
    \end{equation}
    and
    \[Z(x)=\begin{pmatrix}
    u^{-1}(x)&0  \\
    -p(x)u'(x)& u(x)
    \end{pmatrix}Y(x).\]
    The transformation \eqref{z-system} takes \eqref{y-sistem} into
      \begin{equation}
	 \label{z-system2}
	 Z'(x)=
	 \begin{pmatrix}
	    0 & p(x)^{-1}u(x)^{-2} \\
	    s(x)u(x)^{2}-u(x)\left(p(x)u'(x)\right)' & 0
	 \end{pmatrix}
	 Z(x), \qquad x\in [a,b).
      \end{equation}
      By the transformation
      \begin{equation}
	 \label{W}
	 W(x):=\frac{1}{2}\begin{pmatrix*}[r]
	    1& 1 \\ 1&-1
	 \end{pmatrix*}Z(x)
      \end{equation}
      with inverse transformation
      \[Z(x)=\begin{pmatrix*}[r]
	 1 & 1  \\ 1 & -1
      \end{pmatrix*}W(x)\]
we obtain the following system, using that 
$p^{-1}u^{-2}=su^{2}=\sqrt{s/p}\in L^{1}_{\textnormal{loc}}[a,b)$,  
      \begin{equation}
	 \label{eq:Wequation}
	 W'(x)=
	 \begin{pmatrix}
	    \left(\frac{s(x)}{p(x)}\right)^{1/2} & 0 \\
	    0&-\left(\frac{s(x)}{p(x)}\right)^{1/2}
	 \end{pmatrix}W(x)
	 -\frac{1}{2}(u(x)(pu')'(x))
	 \begin{pmatrix*}[r]
	    1 & 1 \\ -1 & -1
	 \end{pmatrix*}W(x).
      \end{equation}
      If we set
      \[S(x):=\frac{-u(x)(pu')'(x)}{2}
      \begin{pmatrix*}[r]
	 1& 1 \\
	 -1& -1
      \end{pmatrix*},\]
      then \eqref{eq:Wequation} can be written as
      \begin{equation}
	 \label{eq:Wequation2}
	 W'(x)=
	 \left( \frac{s(x)}{p(x)}\right)^{1/2}
	 \begin{pmatrix}
	    1  & 0 \\ 0 & -1
	 \end{pmatrix}W(x)
	 + S(x) W(x).
      \end{equation}
      Note that by hypothesis $\int_{a}^{b}\|S(t)\|_{\C^2}\, \rd t = \int_{a}^{b}\|u(pu')'(t)\|\, \rd t=M<\infty$ and
      \begin{equation*}
	 M(x) := \int_{x}^{b}\|S(t)\|_{\C^2}\, \rd t \le  M<\infty,
	 \qquad x\in [a,b)
      \end{equation*}
 where $\|\cdot\|_{\C^2}$ denotes the operator norm of a $2\times 2$ matrix.
      Now we will construct two linearly independent solutions of \eqref{eq:Wequation} using a fixed point argument.
      For our first solution, we set
      $V(x):=W(x)e^{\int_{a}^{x}\sqrt{s(t)/p(t)}\, \rd t}$ which solves the differential equation
      \begin{equation}
	 \label{2.2}
	 V'(x)=\begin{pmatrix}
	    2\left(\frac{s(x)}{p(x)}\right)^{1/2} &0  \\
	    0& 0
	 \end{pmatrix}V(x)
	 + S(x) V(x),
	 \qquad x\in [a,b).
      \end{equation}
      A fundamental system for the homogeneous differential equation
      \begin{equation}
	 \label{fund}
	 V_0'(x)=\begin{pmatrix}
	    2\left(\frac{s(x)}{p(x)}\right)^{1/2} &0  \\
	    0&0
	 \end{pmatrix}V_0(x)
      \end{equation}
      is given by the matrix
      \[\Phi(x)=
      \begin{pmatrix}
	 e^{2\int_{a}^{x}\sqrt{s(t)/p(t)}\, \rd t} & 0  \\
	 0& 1
      \end{pmatrix}.\]
      Since $\re \left[ p(x)s(x) \right]^{1/2}\geq 0$ in $[a,b)$, we obtain
      \begin{equation}
	 \label{norma1}
	 \|\Phi(x)\Phi(\widehat{x})^{-1}\|_{2}
	 =
	 \left\|\begin{pmatrix}
	    e^{-2\int_{x}^{\widehat{x}}\left(\frac{s(t)}{p(t)}\right)^{1/2}\, \rd t} & 0  \\
	    0& 1
	 \end{pmatrix}\right\|_{\C^2}=1,
	 \qquad a\leq x<\widehat{x}.
      \end{equation}
      Let $C([a,b),\C^2)$ denote the Banach space of continuous bounded functions $f:[a,b)\to \C^{2}$ equipped with the supremum norm $\|\cdot\|_{\infty}$.
      We define the continuous operator $F:C([a,b),\C^2)\to C([a,b),\C^2)$ by
      \begin{equation}
	 \label{F}
	 (Ff)(x):=
	 \begin{pmatrix} 0  \\ 1
	 \end{pmatrix}
	 -\Phi(x)\int_{x}^{b}\Phi(t)^{-1}S(t)f(t)\, \rd t.
      \end{equation}
      Next we construct a sequence of functions $\{h_{k}\}_{k\in\N}$ by
      \begin{equation*}
	 h_{1}:=\begin{pmatrix} 0  \\ 1
	 \end{pmatrix},
	 \qquad
	 h_{k+1}(x)=(Fh_{k})(x),
	 \quad k\ge 1.
      \end{equation*}
      We proceed by induction to  prove that for all $x\ge a$
      \begin{equation}
	 \label{eq:pointwise}
	 \|h_{k+1}(x)-h_{k}(x)\|
	 \leq \frac{1}{k!}\left( \int_x^b \|S(t)\|_{\C^2}\,\rd t \right)^{k}
	 =  \frac{(M(x))^{k}}{k!}
	 \le  \frac{M^{k}}{k!}.
      \end{equation}
      To this end, note that for all $x\geq a$
      \begin{align*}
	 \|h_{2}(x)-h_{1}(x)\|
	 &= \left\|\, -\Phi(x)\int_{x}^{b}\Phi(t)^{-1}S(t)h_{1}(t)\, \rd t \, \right\|
	 \leq \int_{x}^{b}\|S(t)\|_{\C^2}\, \rd t
	 = M(x) \leq M.
      \end{align*}
      By the induction hypothesis we obtain
      \begin{align*}
	 |h_{k+1}(x)-h_{k}(x)|
	 &\leq \int_{x}^{b} \|S(t)\|_{\C^2}|h_{k}(t)-h_{k-1}(t)| \, \rd t\notag\\
	 &\leq \int_{x}^{b} \|S(t)\|_{\C^2}
	 \frac{\left(\int_{t}^{b}\|S(u)\|_{\C^2}\, \rd u\right)^{k-1}}{(k-1)!}\, \rd t \notag\\
	 &\leq\left|-\int_{x}^{b}
	 \frac{\rd }{\rd t} \frac{ \left(\int_{t}^{b}\|S(u)\|_{\C^2}\, \rd u\right)^{k} }{ k! }\, \rd t
	 \right|\notag\\
	 &= \frac{1}{k!}\left(\int_{x}^{b}\|S(u)\|_{\C^2}\, \rd u\right)^{k}
	 = \frac{(M(x))^{k}}{k!}
	 \leq \frac{M^{k}}{k!}.
      \end{align*}
      For all $n\geq m\geq 1$ we have that
      \begin{equation*}
	 \|h_{n}(x)-h_{m}(x)\|
	 = \left\|\sum_{k=m}^{n-1}h_{k+1}(x)-h_{k}(x)\right\|
	 \leq \sum_{k=m}^{n-1}\frac{(M(x))^{k}}{k!}\leq e^{M(x)}
      \end{equation*}
      and consequently
      $\|h_{n} - h_{m}\|_\infty \leq e^M$,
      so $\{h_{n}\}_{k\in\N}$ converges uniformly to a bounded and continuous function $h\in C([a,b),\C^2)$.
      Moreover, $h$ is a fixed point of $F$ because $F$ is continuous.
      Let us show that $h$ is a solution of \eqref{2.2}.
      \begin{align*}
	 h'(x)&= \frac{\rd}{\rd x}(F(h))(x)\\
	 &= \begin{pmatrix}
	    2\sqrt{s(x)/p(x)} & 0  \\
	    0 & 0
	 \end{pmatrix}h_{1}
	 -\Phi'(x)\int_{x}^{b}\Phi(t)^{-1}S(t)h(t)\, \rd t+S(x)h(x)\\
	 &= \begin{pmatrix}
	    2\sqrt{s(x)/p(x)}&0  \\
	    0& 0
	 \end{pmatrix}
	 \left(h_{1}-\Phi(x)\int_{x}^{b}\Phi(t)^{-1}S(t)h(t)\, \rd t\right)+S(x)h(x)
	 \\
	 &= \begin{pmatrix}
	    2\sqrt{s(x)/p(x)} & 0  \\
	    0 & 0
	 \end{pmatrix}F(h)(x)
	 +S(x)h(x)
	 = \begin{pmatrix}
	    2\sqrt{s(x)/p(x)} & 0  \\
	    0 & 0
	 \end{pmatrix}h(x)+S(x)h(x).
      \end{align*}
      From \eqref{eq:pointwise} we obtain
      \begin{equation}
	 \label{h}
	 \|h-h_{1}\|_{\infty}
	 =:\left\| \begin{pmatrix}
	    G_{1}  \\ G_{2}
	 \end{pmatrix} \right\|_{\infty}
	 \leq \sum_{k=1}^{\infty}\frac{M^k}{k!}=e^{M}-1.
      \end{equation}
      Finally, by hypothesis, for any $\epsilon>0$, there exists $C\in \R$ such that
      $M(C) <\ln (1+\epsilon)$
      and therefore, by \eqref{eq:pointwise},
      \[|h(x)-h_{1}(x)|\leq e^{M(x)}-1<\epsilon\]
      for all $x\geq C.$
      It follows that
      \[G_{i}(x)\to0,\quad\mbox{when}\quad x\to b,\quad \mbox{for }i=1,2,\]
      hence by \eqref{h}
      \[h(x)=\begin{pmatrix}
	 G_1(x)  \\ G_2(x)+1
      \end{pmatrix}
      \to
      \begin{pmatrix} 0 \\ 1
      \end{pmatrix}
      \qquad
      \text{for } x\to b.
      \]
      Since $h$ is a solution for $\eqref{2.2}$, a solution $Z_{0}$ for \eqref{z-system2} is given by
      \begin{equation}
	 \label{eq:Zsolution}
	 Z_{0}(x)=\begin{pmatrix}
	    z_{0}(x) \\
	    p(x)u^{2}z_{0}'(x)
	 \end{pmatrix}
	 = e^{-\int_{a}^{x}\sqrt{s(t)/p(t)}\, \rd t}
	 \begin{pmatrix*}[r]
	    1 & 1 \\ 1 & -1
	 \end{pmatrix*}
	 \begin{pmatrix}
	    G_1(x)  \\
	    G_2(x)+1
	 \end{pmatrix}.
      \end{equation}
      Recall that $z=uy$.
      Setting $F_{1}:=G_{1}+G_{2}$ and $F_{2}:=G_{1}-G_{2}$ we obtain
      \[y(x)=(p(x)s(x))^{-1/4}e^{-\int_{a}^{x}\sqrt{s(t)/p(t)}\, \rd t}(1+F_{1}(x))\]
      and
      \[((ps)^{1/4}y)'(x)=(s/p)^{1/2}(x)e^{-\int_{a}^{x}\sqrt{s(t)/p(t)}\, \rd t}(F_{2}(x)-1)\]
      where we used that $(ps)^{1/4}y = z_0$ and $z_0'$ as given in~\eqref{eq:Zsolution}.
      \medskip

      In order to obtain a second solution of \eqref{eq:Wequation}, we set
      $\widehat V(x) := W(x) e^{-\int_{a}^{x}\sqrt{s(t)/p(t)}\, \rd t}$.
      It satisfies
      \begin{equation}
	 \label{2.2n}
	 \widehat V'(x)= \begin{pmatrix}
	    0 &0  \\
	    0& -2 \sqrt{ s(x)/p(x)}
	 \end{pmatrix}
	 \widehat V(x)+\frac{1}{2}(-u(x)(pu')'(x))
	 \begin{pmatrix*}[r]
	    1 & 1 \\ -1 & -1
	 \end{pmatrix*}\widehat V(x).
      \end{equation}
      As before, we note that the constant
      $\widehat{h}_{1}:= \begin{pmatrix} 1 \\ 0
      \end{pmatrix}$
      is a solution of the homogeneous equation
      \begin{equation}
	 \label{system2}
	 \widehat V'(x)=\begin{pmatrix}
	    0 & 0  \\
	    0 & -2\sqrt{s(x)/p(x)}
	 \end{pmatrix}\widehat V(x)
      \end{equation}
      and that
      \[\widehat{\Phi}(x)
      := \begin{pmatrix}
	 1 & 0  \\
	 0 & e^{-2\int_{a}^{x}\sqrt{s(t)/p(t)}\,\rd t}
      \end{pmatrix}\]
      is a fundamental system for \eqref{system2}.
      Again, for $a \le\widehat{x} < x$
      \[\|\widehat{\Phi}(x)\widehat{\Phi}^{-1}(\widehat{x})\|_{2}
      = \left\| \begin{pmatrix}
	 1 &0  \\
	 0& e^{-2\int_{\widehat{x}}^{x}\sqrt{s(t)/p(t))}\,\rd t}
      \end{pmatrix}\right\|_{2}=1\]
      and arguing as before, the operator $\widehat{F}$ defined by
      \[(\widehat{F}f)(x):= \widehat{h}_{1}+\widehat{\Phi}(x)\int_{a}^{x}\widehat{\Phi}^{-1}(t)S(t)f(t)\,\rd t\]
      yields a solution $\widehat h := \lim_{n\to\infty} F^{n}(\widehat h_1)$ for~\eqref{2.2n} which in turn leads to a solution $\widehat y$ of \eqref{2.1} which satisfies~\eqref{widey}.

In order to prove that $y$ and $\widehat{y}$ are linearly independent, we calculate the following Wronskian
\begin{align*}
    W((sp)^{1/4}y,(sp)^{1/4}\widehat{y})(x)&=(sp)^{1/4}\left(((sp)^{1/4}y)'(x)\widehat{y}(x)-y(x)((sp)^{1/4}\widehat{y})'(x)\right)\\
    &=\left(\frac{s(x)}{p(x)}\right)^{1/2}\left((F_{2}(x)-1)(1+\widehat{F}_{1}(x))-(1+\widehat{F}_{2}(x))(1+F_{1}(x))\right).
\end{align*}
As $F_{j}(x),\widehat{F}_{j}(x)\to0$ for $x\to b$ and $s(x)\neq 0$ for $x\in[a,b)$ the Wronskian is non-zero for all $x.$
\end{proof}

\section{Summability properties of solutions} \label{usbek}

Now we discuss  properties of the solutions of $\eqref{2.1}$ for the case $p=1$ and $b=\infty$.
In this section we will always assume that the conditions of the Theorem~\ref{t2.2} hold. Let $y$ and $\widehat{y}$ be the fundamental system of \eqref{2.1} as in \eqref{y} and \eqref{widey}.

\begin{remark}
   \label{rem:comparison}
   Note that $\re [ s(x)^{1/2} ] > 0$ for all $x\in [a,\infty)$.
   Since both $F_1$ and $\widehat F_1$ tend to zero for $x\to\infty$, it follows that
   $|y(x)|\leq |\widehat{y}(x)|$ for large $x$.
   In particular, $\widehat y\in L^2[a,\infty)$ implies $y\in L^2[a,\infty)$.
\end{remark}
We start with a helpful comparison.
\begin{lemma}
   \label{thm:comparison}
   Assume $p=1$ and $b=\infty$ and that the conditions of the Theorem~\ref{t2.2} are fulfilled. Let $y$ and $\widehat{y}$ be
   the fundamental system in \eqref{y} and \eqref{widey}.
   Let $\psi\ge 0$ be a measurable function on $[a, \infty)$.
   \begin{enumerate}[label=\upshape(\alph*)]
      \item\label{thm:comparison:a}
      If $\psi \in L^1(a_0,\infty)$ for some $a_0\ge a$ and 
			$$
      \limsup_{x\to \infty} \frac{1}{\psi(x)}
      \frac{e^{2\int_{a}^{x} \re [ s(t)^{1/2} ] \, \rd t}}{|s(x)|^{1/2}}<\infty,
			$$
      then $\widehat y\in L^2(a,\infty)$.

      \item\label{thm:comparison:b}
      If $\psi \notin L^1(a_0,\infty)$ for any $a_0\ge a$ and 
      $$
			\liminf_{x\to \infty} \frac{1}{\psi(x)}
      \frac{e^{2\int_{a}^{x}\re [ s(t)^{1/2} ] \, \rd t}}{|s(x)|^{1/2}} > 0,
			$$
      then $\widehat y\notin L^2(a,\infty)$.

      \item \label{criterio-s:a}
      All solutions of \eqref{2.1} are in $L^{2}[a,\infty)$ if and only if the following function is in $L^{2}[a,\infty)$:
      $$ x\mapsto s(x)^{-1/4}e^{\int_{a}^{x} (s(t))^{1/2}\,\rd t}.
      $$

   \end{enumerate}
\end{lemma}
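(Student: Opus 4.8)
The plan is to prove parts \ref{thm:comparison:a} and \ref{thm:comparison:b} by a direct comparison of $|\widehat y(x)|^2$ with the auxiliary function $\psi$, using the asymptotic formula \eqref{widey} from Theorem~\ref{t2.2}, and then to deduce part \ref{criterio-s:a} from these two parts together with Remark~\ref{rem:comparison}. First I would record that, since $p=1$, formula \eqref{widey} gives
\[
|\widehat y(x)|^2 = |s(x)|^{-1/2}\,e^{2\int_a^x \re[s(t)^{1/2}]\,\rd t}\,|\widehat F_1(x)+1|^2 ,
\]
and that by Theorem~\ref{t2.2} we have $\widehat F_1(x)\to 0$ as $x\to\infty$, so that $|\widehat F_1(x)+1|^2 \to 1$; in particular this factor is bounded above and bounded away from zero for $x$ large. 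Thus, abbreviating $g(x) := |s(x)|^{-1/2} e^{2\int_a^x \re[s(t)^{1/2}]\,\rd t}$, we have $|\widehat y(x)|^2 \asymp g(x)$ near infinity.

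For part \ref{thm:comparison:a}: the hypothesis $\limsup_{x\to\infty} g(x)/\psi(x) < \infty$ means there is a constant $C$ and some $\alpha'\ge\alpha$ with $g(x)\le C\psi(x)$ for $x\ge\alpha'$. Combining with $|\widehat y(x)|^2 \le 2 g(x)$ for large $x$ (using $|\widehat F_1+1|^2\le 2$ eventually) gives $|\widehat y|^2 \le 2C\psi$ on $[\alpha',\infty)$; since $\psi\in L^1(\alpha,\infty)\subset L^1(\alpha',\infty)$ and $\widehat y$ is continuous (hence integrable) on the compact piece $[a,\alpha']$, we conclude $\widehat y\in L^2(a,\infty)$. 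For part \ref{thm:comparison:b}: the hypothesis $\liminf_{x\to\infty} g(x)/\psi(x) > 0$ gives $c>0$ and $\alpha'$ with $g(x)\ge c\psi(x)$ for $x\ge\alpha'$; combined with $|\widehat y(x)|^2 \ge \tfrac12 g(x)$ for large $x$, we get $|\widehat y|^2 \ge \tfrac{c}{2}\psi$ on a tail; since $\psi\notin L^1(\alpha',\infty)$, the integral $\int^\infty |\widehat y|^2$ diverges, so $\widehat y\notin L^2(a,\infty)$. The only mild subtlety here is bookkeeping the thresholds so that the $\liminf$/$\limsup$ hypotheses, the convergence $\widehat F_1\to0$, and the domain of $\psi$ all apply on a common tail $[\alpha',\infty)$ — this is routine but should be written carefully.

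For part \ref{criterio-s:a}, note that the function $x\mapsto s(x)^{-1/4}e^{\int_a^x\sqrt{s(t)}\,\rd t}$ has squared modulus exactly $g(x)$ as defined above (the exponent doubles, $|s^{-1/4}|^2 = |s|^{-1/2}$, and $|e^{\int\sqrt s}|^2 = e^{2\int\re[s^{1/2}]}$, using that $\arg s \ne\pi$ so $\sqrt s$ is well defined). By Remark~\ref{rem:comparison}, $\widehat y\in L^2[a,\infty)$ implies $y\in L^2[a,\infty)$, so all solutions of \eqref{2.1} lie in $L^2[a,\infty)$ if and only if $\widehat y\in L^2[a,\infty)$; and by the $\asymp$ relation $|\widehat y(x)|^2\asymp g(x)$ established above, this holds if and only if $g\in L^1[a,\infty)$, i.e.\ if and only if $x\mapsto s(x)^{-1/4}e^{\int_a^x\sqrt{s(t)}\,\rd t}\in L^2[a,\infty)$. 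I do not anticipate a genuine obstacle; the main thing to get right is the two-sided bound $\tfrac12 g(x)\le|\widehat y(x)|^2\le 2g(x)$ on an explicit tail, which follows cleanly from $\|\widehat F_1\|_\infty$-control and $\widehat F_1(x)\to0$ in Theorem~\ref{t2.2}, and then parts \ref{thm:comparison:a}, \ref{thm:comparison:b}, \ref{criterio-s:a} are each a one- or two-line comparison argument.
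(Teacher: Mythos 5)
Your argument is correct and follows essentially the same route as the paper: the two-sided bound $\tfrac12 g(x)\le|\widehat y(x)|^2\le 2g(x)$ on a tail (coming from $\widehat F_1(x)\to0$ in Theorem~\ref{t2.2}) combined with the $\limsup$/$\liminf$ hypotheses gives \ref{thm:comparison:a} and \ref{thm:comparison:b}, and \ref{criterio-s:a} is deduced exactly as in the paper from the formula \eqref{widey} together with Remark~\ref{rem:comparison}. Your version merely makes explicit the bookkeeping of the common tail and the integrability of $\widehat y$ on the compact initial piece, which the paper leaves implicit.
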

\begin{proof}
   Note that for any nonnegative function $\psi$ we have that
   \begin{align*}
      | \widehat y(x) |^2
      = |s(x)|^{-1/2} |1+\widehat{F}_{1}(x)|^2
      e^{ 2\int_{a}^{x}\re [ s(t)^{1/2} ]\, \rd t}
      =
      \psi(x)
      \frac{ e^{ 2\int_{a}^{x}\re [ s(t)^{1/2} ]\, \rd t} }{ \psi(x) |s(x)|^{1/2}}
      |1+\widehat{F}_{1}(x)|^2 .
   \end{align*}
   Since $\lim_{x\to\infty} \widehat F_1(x) = 0$, we have for large enough $x$
   \begin{align*}
      \frac{1}{2}  \psi(x)
      \liminf_{\eta\to\infty}
      \frac{ e^{ 2\int_{a}^{\eta}\re [ s(t)^{1/2} ]\, \rd t} }{ \psi(\eta) |s(\eta)|^{1/2}}
      \le | \widehat y(x) |^2
      \le 2 \psi(x)
      \limsup_{\eta\to\infty}
      \frac{ e^{ 2\int_{a}^{\eta}\re [ s(t)^{1/2} ]\, \rd t} }{ \psi(\eta) |s(\eta)|^{1/2}}
   \end{align*}
   and \ref{thm:comparison:a} and \ref{thm:comparison:b} are proved. Assertion \ref{criterio-s:a} follows from the equality in \eqref{widey} and from Remark~\ref{rem:comparison}.
\end{proof}

The next corollary follows from Lemma~\ref{thm:comparison}~\ref{thm:comparison:a}, Remark~\ref{rem:comparison} and setting $\psi(x) := x^{-\rho}$.
\begin{corollary}\label{conmigo}
      If for some $\rho>1$
			$$
			\displaystyle \limsup _{x\to \infty}
      \frac{ x^{\rho} e^{2\int_{a}^{x}\re[s(t)^{1/2}]\, \rd t}}{ | s(x)|^{1/2}}<\infty,
			$$
      then all solutions of $\eqref{2.1}$ are in $L^{2}[a,\infty)$.
\end{corollary}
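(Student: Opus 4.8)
The plan is to specialize Lemma~\ref{thm:comparison}~\ref{thm:comparison:a} to the weight $\psi(x):=x^{-\rho}$ and then to pass from $\widehat y$ to $y$ using Remark~\ref{rem:comparison}, so that the fundamental system $\{y,\widehat y\}$ of \eqref{2.1}, and hence every solution, turns out to be square integrable.

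First I would check the hypotheses of part~\ref{thm:comparison:a} for this choice of $\psi$. Since $\rho>1$, the function $x\mapsto x^{-\rho}$ belongs to $L^1(\alpha,\infty)$ for every $\alpha>0$; taking $\alpha:=\max\{a,1\}\ge a$ provides an admissible choice (and if $a>0$ one may simply take $\alpha=a$). With $\psi(x)=x^{-\rho}$ the expression whose $\limsup$ is required to be finite in part~\ref{thm:comparison:a} becomes
\[
\frac{1}{\psi(x)}\,\frac{e^{2\int_{a}^{x}\re[s(t)^{1/2}]\,\rd t}}{|s(x)|^{1/2}}
=\frac{x^{\rho}\,e^{2\int_{a}^{x}\re[s(t)^{1/2}]\,\rd t}}{|s(x)|^{1/2}},
\]
which is exactly the quantity assumed to have finite $\limsup$ as $x\to\infty$ in the statement of the corollary.

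Hence Lemma~\ref{thm:comparison}~\ref{thm:comparison:a} applies and yields $\widehat y\in L^2(a,\infty)$. By Remark~\ref{rem:comparison} we have $|y(x)|\le|\widehat y(x)|$ for large $x$, so $\widehat y\in L^2$ forces $y\in L^2$ as well. Thus both members of the fundamental system $\{y,\widehat y\}$ of \eqref{2.1} are square integrable on $[a,\infty)$, and since every solution of the linear second order equation \eqref{2.1} is a linear combination of $y$ and $\widehat y$, all solutions lie in $L^2[a,\infty)$.

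There is no substantial obstacle in this argument — it is a direct corollary of Lemma~\ref{thm:comparison}. The only minor point deserving attention is the integrability of $\psi$ near the left endpoint when $a=0$, which is why one chooses $\alpha=\max\{a,1\}$; this does not affect the $\limsup$ hypothesis, which is purely an assertion about the behaviour as $x\to\infty$. One could equally deduce the result from Lemma~\ref{thm:comparison}~\ref{criterio-s:a}, by observing that the hypothesis forces $x\mapsto|s(x)|^{-1/2}e^{2\int_a^x\re[s(t)^{1/2}]\,\rd t}$ to be dominated by a constant multiple of $x^{-\rho}$ for large $x$, hence to be integrable, which is precisely the $L^2$-condition appearing there.
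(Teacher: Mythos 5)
Your argument is correct and is exactly the paper's proof: the corollary is obtained by applying Lemma~\ref{thm:comparison}~\ref{thm:comparison:a} with $\psi(x)=x^{-\rho}$ and then using Remark~\ref{rem:comparison} to pass from $\widehat y$ to $y$, hence to all solutions. Your remark about choosing $\alpha=\max\{a,1\}$ when $a=0$ and the alternative route via part~\ref{criterio-s:a} are fine but not needed beyond what the paper does.
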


\begin{theorem}
   \label{criterio-s}
   Assume $p = 1$ and $b=\infty$ and that the conditions of the Theorem~\ref{t2.2} are fulfilled. Let $y$ and $\widehat{y}$ be
   the fundamental system as in \eqref{y} and \eqref{widey}.
   Then
	$$
	\widehat{y}\notin L^{2}[a,\infty)
	$$
	if one (or more) of the following conditions is satisfied.
   \begin{enumerate}[label=\upshape(\alph*)]
      \item \label{criterio-s:neu1}
      $|s|^{-1/2}\notin L^1[a,\infty)$.

      \item \label{criterio-s:b}
      The function $s$ is bounded, i.e.,  $s\in L^{\infty}$.

      \item \label{criterio-s:f}
      We have
			$$
			\int_a^\infty \re [s(x)^{1/2}]\, \rd t < \infty
      \quad \mbox{and} \quad
      \liminf_{x\to\infty} \frac{\re [s(x)^{1/2} ] }{ |s(x)|^{1/2} } > 0.
			$$

      \item \label{criterio-s:e}
      We have 
      $$\int_a^\infty \re [s(x)^{1/2}]\, \rd t = \infty
      \quad \mbox{and} \quad 
      \liminf_{x\to\infty} \frac{\left(\re [s(x)^{1/2}]\right)^{N}}{ | s(x)^{1/2} | } > 0
			\quad \mbox{for some } N\in\N.
			$$
			
			\item \label{criterio-s:c}
       $|\arg s|\leq \pi-\epsilon_{0}$ for some 
      $\epsilon_{0}>0$.

   \end{enumerate}
\end{theorem}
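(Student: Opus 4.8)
The plan is to verify each criterion (a)--(e) by exhibiting, in each case, a suitable nonnegative function $\psi$ for which Lemma~\ref{thm:comparison}\ref{thm:comparison:b} applies, i.e.\ one shows $\psi\notin L^1(\alpha,\infty)$ for any $\alpha\ge a$ together with
\[
\liminf_{x\to\infty}\frac{1}{\psi(x)}\,\frac{e^{2\int_a^x\re[s(t)^{1/2}]\,\rd t}}{|s(x)|^{1/2}}>0,
\]
and then conclude $\widehat y\notin L^2[a,\infty)$. Throughout I would use the elementary comparison between $\re[s^{1/2}]$ and $|s|^{1/2}=|s^{1/2}|$: writing $s(x)^{1/2}=|s(x)|^{1/2}e^{i\beta(x)}$ with $\beta(x)=\tfrac12\arg s(x)\in(-\pi/2,\pi/2)$ (recall $\arg s\ne\pi$), one has $\re[s(x)^{1/2}]=|s(x)|^{1/2}\cos\beta(x)\ge 0$, so the quotient $\re[s^{1/2}]/|s|^{1/2}=\cos\beta\in[0,1]$; case \ref{criterio-s:c} is exactly the statement that $\cos\beta$ is bounded below by $\sin(\epsilon_0/2)>0$.

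For \ref{criterio-s:neu1} take $\psi(x):=|s(x)|^{-1/2}$; then $\psi\notin L^1(\alpha,\infty)$ for any $\alpha$ by hypothesis, and since $e^{2\int_a^x\re[s^{1/2}]}\ge 1$ (the integrand is nonnegative), the quotient in \ref{thm:comparison:b} is $\ge 1$, so the $\liminf$ is positive. For \ref{criterio-s:c}, and likewise for the second hypothesis in \ref{criterio-s:f} and in \ref{criterio-s:e}, I would choose $\psi(x):=|s(x)|^{-1/2}e^{2\int_a^x\re[s(t)^{1/2}]\,\rd t}$ so that the quotient in \ref{thm:comparison:b} equals $1$ identically and only the non-summability $\psi\notin L^1(\alpha,\infty)$ must be checked: using $\re[s^{1/2}]=|s|^{1/2}\cos\beta$ one gets $\psi(x)=|s(x)|^{-1/2}e^{2\int_a^x|s(t)|^{1/2}\cos\beta(t)\,\rd t}$, and in case \ref{criterio-s:e} the lower bound $\re[s^{1/2}]\ge c\,|s^{1/2}|^{1/N}=c\,|s|^{1/(2N)}$ combined with $\int_a^\infty\re[s^{1/2}]=\infty$ forces $\int_a^x|s(t)|^{1/2}\,\rd t\to\infty$ at a rate controlling the decaying prefactor $|s|^{-1/2}$ (if $|s|$ stays bounded below the exponential dominates; if $|s|\to\infty$ along a subsequence one still wins because $\int_a^x\re[s^{1/2}]\to\infty$); case \ref{criterio-s:f} is the complementary regime where the exponential is bounded but the lower bound on $\re[s^{1/2}]/|s|^{1/2}$ keeps $\psi(x)\gtrsim |s(x)|^{-1/2}\cdot(\text{bounded below})$ non-integrable — here one must use that $\int_a^\infty\re[s^{1/2}]<\infty$ together with $\re[s^{1/2}]\ge c|s|^{1/2}$ implies $|s|^{1/2}\in L^1$, hence $|s|^{-1/2}$ cannot be integrable on any tail (otherwise $1=|s|^{1/2}|s|^{-1/2}$ would be integrable, a contradiction on a finite-measure tail only if one is careful — more precisely, $|s|^{1/2}\in L^1$ forces $|s|^{1/2}\to 0$ along a sequence so $|s|^{-1/2}$ is large there), which I would make precise via Hölder or a direct measure-theoretic argument.

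For \ref{criterio-s:b}, with $\|s\|_\infty=:S_0<\infty$, I would combine it with \ref{criterio-s:neu1}: if $|s|^{-1/2}\notin L^1[a,\infty)$ we are done by \ref{criterio-s:neu1}; otherwise $|s|^{-1/2}\in L^1[\alpha,\infty)$ for some $\alpha$, but $|s(x)|^{-1/2}\ge S_0^{-1/2}>0$ is bounded below by a positive constant, contradicting integrability on the infinite interval $[\alpha,\infty)$. Hence \ref{criterio-s:b} always reduces to \ref{criterio-s:neu1}. Similarly I expect \ref{criterio-s:f} to split on whether $|s|^{-1/2}\in L^1$: if not, invoke \ref{criterio-s:neu1}; if so, derive a contradiction from $\re[s^{1/2}]\ge c|s|^{1/2}$ and $\int_a^\infty\re[s^{1/2}]<\infty$.

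The main obstacle is the bookkeeping in \ref{criterio-s:e}: the hypothesis mixes a lower bound of the form $(\re[s^{1/2}])^N\gtrsim|s|^{1/2}$ (which for $N\ge 2$ is only informative when $|s|^{1/2}$ is not small) with the divergence $\int_a^\infty\re[s^{1/2}]=\infty$, and one must show that the product $|s(x)|^{-1/2}e^{2\int_a^x\re[s(t)^{1/2}]\,\rd t}$ fails to be integrable on every tail. The clean way is: on the set where $|s(x)|\le 1$ one has $|s(x)|^{-1/2}\ge 1$, so the prefactor helps; on the set where $|s(x)|>1$ one has $\re[s(x)^{1/2}]\ge c|s(x)|^{1/(2N)}>c$, and since the total integral of $\re[s^{1/2}]$ diverges, at least one of these two regimes contributes an infinite amount — in the first the integrand is $\ge e^{2\int_a^x\re[s^{1/2}]}\to\infty$; in the second, either the integral of $\re[s^{1/2}]$ over that regime diverges (and then the exponential already forces non-integrability) or it converges, in which case the divergence comes entirely from the first regime and we are back to the easy case. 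Making this dichotomy airtight, rather than the other four cases, is where I would spend the most care.
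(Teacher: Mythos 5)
Your overall strategy coincides with the paper's: feed a suitable nonnegative $\psi$ into Lemma~\ref{thm:comparison}\ref{thm:comparison:b}, reduce \ref{criterio-s:b} to \ref{criterio-s:neu1}, and split \ref{criterio-s:c} into the convergent/divergent regimes covered by \ref{criterio-s:f} and \ref{criterio-s:e}. Cases \ref{criterio-s:neu1} and \ref{criterio-s:b} are correct, and \ref{criterio-s:f} becomes correct once you replace the aside ``otherwise $1=|s|^{1/2}|s|^{-1/2}$ would be integrable'' (a product of integrable functions need not be integrable) by the Cauchy--Schwarz bound you allude to, namely $X-\alpha\le\bigl(\int_\alpha^X|s|^{1/2}\,\rd t\bigr)^{1/2}\bigl(\int_\alpha^X|s|^{-1/2}\,\rd t\bigr)^{1/2}$, which shows $|s|^{-1/2}\notin L^1$ on every tail and so reduces \ref{criterio-s:f} to \ref{criterio-s:neu1}; the paper argues slightly differently (it shows the set where $\re[s^{1/2}]\le 1$ has infinite measure and bounds $|\widehat y|^2$ from below by a positive constant times its characteristic function), but both routes work.

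The genuine gap is in \ref{criterio-s:e}. Writing $g(x)=\int_a^x\re[s(t)^{1/2}]\,\rd t$, your choice $\psi=|s|^{-1/2}e^{2g}$ requires $\psi\notin L^1$ on every tail, and by the hypothesis $(\re[s^{1/2}])^N\ge c\,|s|^{1/2}$ (for large $x$) this amounts to proving that $\psi_0:=e^{2g}/(g')^N\notin L^1$ whenever $g\to\infty$, since $\psi\ge c\,\psi_0$ there. Your dichotomy does not establish this: the set $\{|s|\le 1\}$ is the easy part, while the only hard regime is exactly where $|s|$ is large, the prefactor $|s|^{-1/2}$ is tiny, and your argument there is the bare assertion that ``the exponential already forces non-integrability.'' That is precisely the nontrivial quantitative claim; no pointwise comparison shows the exponential compensates when $g'=\re[s^{1/2}]$ oscillates between moderate and very large values, and the second branch of your dichotomy merely restates what has to be proved. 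The paper closes this step by a substitution: $\int_a^x\psi_0^{-1/N}\,\rd t=\int_a^x e^{-2g/N}g'\,\rd t=\frac{N}{2}\bigl(1-e^{-2g(x)/N}\bigr)\le\frac{N}{2}$, so $\psi_0^{-1/N}\in L^1(a,\infty)$; hence $\{\psi_0^{-1/N}>1\}$ has finite measure, the complementary set $\{\psi_0\ge 1\}$ has infinite measure on every tail, and therefore $\int^\infty\psi_0=\infty$. Nothing equivalent to this step appears in your sketch, and since your treatment of \ref{criterio-s:c} relies on \ref{criterio-s:e} with $N=1$ when $\int_a^\infty\re[s^{1/2}]\,\rd t=\infty$, the gap propagates to that case as well.
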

\begin{proof}

   \begin{enumerate}[label=\upshape(\alph*)]

   \item 
      By assumption, $\arg(s(t))\neq \pi$, hence
      $\re[ s(t)^{1/2} ] > 0$ for all $t\in [a,\infty)$.
      Moreover $\widehat F_{1}$ is a bounded function with $\lim_{x\to\infty}\widehat F_{1}(x) = 0$.
      Therefore, for any $c\in (0,1)$ we can take $a_0 > a$ such that 
	 $|1 + \widehat F_1(x)| > c$ for $x\in (a_0, \infty)$ and hence
      \begin{equation}
	 \label{eq:crit:neu1}
	 |\widehat{y}(x)|
	 = |s(x)|^{-1/4}|1 + \widehat F_1(x)|
	 e^{\int_{a}^{x}\re [s(t)^{1/2}]\, \rd t}
	 \ge  c|s(x)|^{-1/4},
      \qquad x > a_0.
      \end{equation}
      %
      Therefore the right hand side in \eqref{eq:crit:neu1} is not square integrable on $[a_0,\infty)$ which shows that $\widehat y \notin L^2[a,\infty)$.

      \item 
      If $s$ is bounded, then clearly $|s|^{-1/2}$ is not square integrable on $[a, \infty)$ and the claim follows from \ref{criterio-s:neu1}.

      \item 
      By assumption,
      $$
      \liminf_{x\to\infty} \frac{\re [s(x)^{1/2} ] }{ |s(x)|^{1/2} } := c > 0
      \quad \mbox{and} \quad
      0 < \int_a^\infty \re [ s(t)^{1/2} ] \, \rd t =: R < \infty.
      $$
      Hence the Lebesgue measure of the set
      $A := \{ t\in [a,\infty) : \re [ s(t)^{1/2} ] \le 1 \}$ is infinite.
      Note that the function $x\mapsto \int_{a}^{x}\re\sqrt{s(t)}\, \rd t$
      is non-decreasing and for large enough $x$, we have that
	$$
		\int_{a}^{x}\re [s(t)]^{1/2}\, \rd t \geq \frac{R}{2}.
	$$
      We conclude
      \begin{equation*}
  |\widehat y(x)|^2 =
  |1+F_1(x)|^2 \frac{e^{2\int_{a}^{x}\re [s(t)]^{1/2}\, \rd t}}{|s(x)|^{1/2}}
  \ge
  \frac{ce^{R} }{2}  \frac{1}{ \re [ s(x)^{1/2} ] }
  \ge
  \frac{ce^{R} }{2}  \chi_{A}(x)
      \end{equation*}
      where $\chi_A$ is the characteristic function of $A$.
      Since $\frac{ce^R}{2}>0$ it follows that $\widehat y\notin L^2[a,\infty)$.

      \item 
      We define
      \begin{equation*}
      \psi(x) := \frac{ e^{2\int_{a}^{x}\re [ s(t)^{1/2} ]\, \rd t} }{ (\re s(x)^{1/2})^{N} },\qquad x\in [a,\infty).
       \end{equation*}
      Then
      \begin{equation*}
			\psi(x)^{-1}\frac{e^{2\int_{a}^{x}\re [ s(t)^{1/2} ] \, \rd t}}{|s(x)|^{1/2}}=
	 \frac{ (\re s(x)^{1/2})^{N} }{|s(x)|^{1/2}}
      \end{equation*}
      and by Lemma~\ref{thm:comparison}~\ref{thm:comparison:b} it suffices to show that
      $\psi \notin L^1[a,\infty)$. If we set 
			$$
			g(x) := \int_a^x \re [s(t)^{\frac{1}{2}}]\,\rd t,
			$$
      then $g$ satisfies the differential equation
      \begin{align*}
	 \frac{e^{2g (x)}}{( g '(x))^{N}}&= \psi(x).
      \end{align*}
      For
			$$
			G(x):= \int_{a}^{x}(\psi(t))^{-1/N}\,\rd t
			$$
			we obtain
      \begin{align*}
	 e^{-\frac{2}{N} g(x)}&= 1- \frac{2}{N} G(x).
      \end{align*}
      By assumption, $g(x)\to \infty$ for $x\to \infty$, so 
			$G(x)\to N/2$ and $\int_{a}^{\infty}\psi^{-1/N}(t)\,\rd t<\infty$.
   Similarly as above, the Lebesgue measure of the set
      $\{ t\in [a,\infty) : \psi^{-1/N}(t) \le 1 \}$ is infinite.
			Hence 
			$$
			\int_{a}^{\infty}\psi^{1/N}(t)\,\rd t=\infty,
			$$
			which implies that $\int_{a}^{\infty}\psi(t)\,\rd t$ diverges.
			
      \item 
      The assumption on $s$ implies that
      $|\arg (s(x))^{1/2} | < (\pi - \epsilon_0)/2$.
      Setting $c := \cos\left(\frac{\pi-\epsilon_{0}}{2}\right)$,
      we have that $\re s(x)^{1/2} = |s(x)|^{1/2}  \cos \arg (s(x))^{1/2}$ and
      \[ \frac{ \re s(x)^{1/2} }{ |s(x)|^{1/2} }  \ge c , \qquad x\in [a,\infty). \]
      If $\int_a^\infty \re [ s(t)^{1/2} ] \, \rd t = \infty$, then the claim follows from \ref{criterio-s:e} with $N=1$.
      If on the other hand $\int_a^\infty \re [ s(t)^{1/2} ] \, \rd t =: R < \infty$,
      the claim follows from \ref{criterio-s:f}.
      \qedhere

   \end{enumerate}
\end{proof}

\begin{remark} If, in addition to the assumptions in
Theorem \ref{criterio-s} \ref{criterio-s:b}, we have $\re [ s^{1/2} ] \in L^{1}[a,\infty)$, then  there exists $K>0$ such that
      \begin{equation*}
  |y(x)|
  \ge K|s(x)|^{-1/4}|1+F_1(x)|
  \ge K \|s\|_\infty^{-1/4}|1+F_1(x)|,
      \end{equation*}
      which implies that $y\notin L^{2}[a,\infty)$.
\end{remark}

\section{Sturm-Liouville equation on a ray} \label{Russisch}

In this section the results of the previous section are used to investigate a second order differential equation defined on a ray in $\mathbb C$. This is motivated
by the recent intensive research connected with $\mathcal P \mathcal T$-symmetric
Hamiltonians, cf.\ \cite{bender}.
Let $\Gamma$ be a ray with angle $\phi \in (-\pi/2,\pi/2)$,
\begin{equation}
\label{Ozuna}
\Gamma:=\{z\in\C:z=xe^{i\phi}, x\in [a,\infty)\}
\end{equation}
for some $a\geq 0$.
Our main interest is to obtain a Weyl criterion for the differential equation
\begin{equation*}
-y(z)''+ \textbf{q}(z)y(z)=\lambda y(z),
\qquad z\in \Gamma
\end{equation*}
where 
 $\textbf{q}:\Gamma \to \C$ is locally integrable.
Setting
\begin{equation}\label{OzunaII}
v(x):=y(z(x)) \mbox{ and } q(x):=\textbf{q}(z(x)) 
\mbox{ with } z(x):=xe^{i\phi},\quad x\in [a,\infty),
\end{equation}
 we obtain the Sturm-Liouville problem
\begin{equation}
    \label{pt ecuation}
    -e^{-2i\phi}v''(x)+q(x)v(x)=\lambda v(x),
    \qquad x\in [a,\infty).
\end{equation}
In order to describe the solutions of \eqref{pt ecuation}, we re-write it in the form
\begin{equation}
    \label{pt-2}
    v''(x)=e^{2i\phi}\left(q(x)-\lambda \right)v(x),
    \qquad x\in [a,\infty).
\end{equation}
If we set
\begin{equation}\label{Kiew}
   s_\lambda(x) := e^{2i\phi}\left(q(x)-\lambda \right),
   \qquad x\in [a,\infty),
\end{equation}
then \eqref{pt-2} is in the form of \eqref{2.1} with $p = 1$ and solutions for \eqref{pt ecuation} are obtained from Theorem~\ref{t2.2}.
\begin{theorem}
   \label{t3.1}
   The differential equation \eqref{pt ecuation} with $\phi\in (-\frac{\pi}{2},\frac{\pi}{2})$ and $\lambda\in \C$ such that
   \begin{itemize}
   \addtolength{\itemsep}{.5\baselineskip}
    \item[(i)]$q, q'\in \textnormal{AC}_{\textnormal{loc}}[a,\infty)$,
      \item[(ii)] $\lambda\notin Q = \cco\{e^{-2i\phi}r+q(x) \, :\, 0<r<\infty,\, x\in [a,\infty)\}$ and
      \item[(iii)] 
      $\displaystyle
      M:=\int_a^\infty\left|\frac{5(e^{2i\phi}q'(x))^{2}}{16(e^{2i\phi}(q(x)-\lambda))^{5/2}}-\frac{e^{2i\phi}q''(x)}{4(e^{2i\phi}(q(x)-\lambda))^{3/2}}\right|dx<\infty
      $
   \end{itemize}
   has a fundamental system  $\{y,\widehat{y}\}$ of the form
   \begin{alignat}{3}
      \label{solutions}
      y(x) &= s_\lambda(x)^{-1/4}e^{-\int_{a}^{x} (s_\lambda(t))^{1/2} \,\rd t}(1+F_{1}(x)),
      && \qquad x\in[a,\infty),\\
      \label{Gesundbrunnen}
      \widehat{y}(x) &= s_\lambda(x)^{-1/4}e^{\int_{a}^{x}(s_\lambda(t))^{1/2} \,\rd t}(1+\widehat{F}_{1}(x)),
      && \qquad x\in[a,\infty),
   \end{alignat}
   with $|\widehat{F}_{1}(x)|,|F_{1}(x)|\to0$ when $x\to +\infty $, $\|F_{1}\|_{\infty}\leq 2e^{M}-2$ and $\|\widehat{F}_{1}\|_{\infty}\leq 2e^{M}-2.$
\end{theorem}
\begin{proof}
We show that the assumptions of Theorem~\ref{t2.2} for the function $s_\lambda$ in \eqref{Kiew}
are fulfilled.

We have $p = 1$
and it follows from (ii) (by sending $r$ to zero) that $q(x) \neq \lambda$ for all
$x\in [a,\infty]$, hence $s_\lambda(x) \neq 0$. 
Assume that there exists $x_0\in [a,\infty]$ with
$s_\lambda(x_0)\in (-\infty,0]$. 
Therefore, by definition of $s_\lambda$ in~\eqref{Kiew},
$$
\lambda = q(x_0) -s_\lambda(x_0)e^{-2i\phi} \in Q,
$$
a contradiction to (ii). Hence,
\begin{equation}\label{Bischleben}
\arg s_\lambda(x) \neq \pi \quad \mbox{for all}\quad
x\in[a,\infty). 
\end{equation}
Moreover, by (i), $s_\lambda^{-1/4}$ and  $(s_\lambda^{-1/4})^\prime$ are in 
$\textnormal{AC}_{\textnormal{loc}}[a,\infty)$ and we have
$$
s_\lambda^{-1/4}(s_\lambda^{-1/4})^{\prime\prime}= s_\lambda^{-1/4}
\left(
\frac{5}{16}(s_\lambda^\prime)^2s_\lambda^{-9/4} - \frac{1}{4} s_\lambda^{\prime\prime}
s_\lambda^{-5/4}
\right).
$$
Now, (iii) implies that 
$$
s_\lambda^{-1/4}(s_\lambda^{-1/4})^{\prime\prime} \in L^{1}(a,\infty)
$$
and we obtain the desired solutions from Theorem~\ref{t2.2}.
\end{proof}
We prove a limit point/limit circle criteria for 
the differential expression \eqref{pt ecuation}. This is our main result. Note that, if
$\cco\{e^{-2i\phi}r+q(x):0<r<\infty,\, x\in [a,\infty)\}\neq \C$,
then the equation \eqref{pt ecuation} is in the Weyl trichotomy described in the Definition~\ref{d1.1}. 

\begin{theorem}\label{t3.2}
   Assume that conditions (i)--(iii) of Theorem~\ref{t3.1} are satisfied for some $\phi\in (-\frac{\pi}{2},\frac{\pi}{2})$ and $\lambda\in \C\setminus Q$.
   Then the following is true.
   \begin{enumerate}[label=\upshape(\alph*)] 
      
      \item
      The equation \eqref{pt ecuation} is in the limit point~I case if one of the following conditions is fulfilled:

      \begin{enumerate}[label=\upshape(\alph*)] 
      
         \item[\upshape ($\alpha$)] 
         We have $|q-\lambda|^{-1/2}\notin L^1[a,\infty)$ or
      
	  \item[\upshape ($\beta$)] 
        $q \in L^{\infty}[a,\infty)$ or
      
	\item[\upshape ($\gamma$)] 
        we have
        $$
        \int_{a}^{\infty}\re\left[ e^{i\phi} (q(x)-\lambda)^{1/2} \right]dx<\infty\quad \text{and} \quad   \displaystyle\liminf_{x\to\infty} \frac{\re\left[ e^{i\phi} (q(x)-\lambda)^{1/2} \right]}{\left|
        q(x)-\lambda\right|^{1/2}}>0 \quad \text{or}
        $$
      
        \item[\upshape($\delta$)]
        for some $N\in\mathbb N$ we have
        $$
        \int_{a}^{\infty}\re\left[e^{i\phi} (q(x)-\lambda)^{1/2} \right]dx
        =\infty \quad \text{and} \quad \liminf_{x\to\infty} \frac{\left(\re \left[ e^{i\phi}
        (q(x)-\lambda)^{1/2}\right] \right)^{N}}{\left|q(x)-\lambda\right|^{1/2}}>0 \quad \text{or}
        $$
        
        \item[\upshape($\epsilon$)]
        we have  for some $\epsilon_{0}>0$
        $$
        |\arg [e^{i\phi} (q(x)-\lambda)^{1/2} ] |\leq \pi-\epsilon_{0}.
        $$ 
      \end{enumerate}

      In the cases $(\alpha)$--$(\epsilon)$ this classification is independent of the choice of $\lambda$, cf.\ Remark~\ref{CanNeverLoveAgain}.
      \medskip

      \item
      The equation \eqref{pt ecuation} is in the limit circle case if for some $\rho>1$
      $$
      q\in  L^1_{\mathrm u}(a,b) \quad \mbox{and} \quad
    \limsup\limits_{x\to \infty}\frac{x^{\rho}e^{2\int_{a}^{x}\re
   \left[ e^{i\phi} (q(x)-\lambda)^{1/2} \right]\,\rd t}}{|
   q(x)-\lambda|^{1/2}}<\infty.
      $$
      This classification is independent of the choice of $\lambda$, 
      cf. Remark~\ref{CanNeverLoveAgain}.
   \end{enumerate}
\end{theorem}
\begin{proof}
   By Theorem~\ref{t3.1} we know that there exist two linearly independent solutions $y$ and $\widehat y$. Cases $(\alpha)$--$(\epsilon)$ follow directly from Theorem~\ref{criterio-s}.
   It remains to show item (b). By Corollary \ref{conmigo} all solutions of $\eqref{2.1}$ are in $L^{2}[a,\infty)$.
   It is easy to see that the statements in the appendix of \cite{BST19} also hold true for non-real (i.e.\ complex-valued) potentials. 
   Then multiplying \eqref{pt ecuation} by $e^{2i\phi}$ and applying \cite[Lemma~A.2~(i)]{BST19} one obtains that all solutions fulfill \eqref{1.3}. 
\end{proof}

Theorem \ref{criterio-s} \ref{criterio-s:f} and \ref{criterio-s:e} show that a necessary condition for \eqref{pt ecuation} to be in the limit circle case at $\infty$ is that 
$\liminf_{x\to\infty} \frac{\re [s_\lambda(x)^{1/2} ] }{ |s_\lambda(x)|^{1/2} } = 0$ where $s_\lambda(x) = e^{2i\phi} (q(x) - \lambda)$ as in \eqref{Kiew}, but in general it is not sufficient, see Theorem~\ref{thm:criteriaReIm} and Example~\ref{ex:polynomials}.
The next theorem gives sufficient criteria for limit point~I, limit point~II and limit circle under the assumption that the limit inferior is actually a limit.
\smallskip

For functions $f, g$ defined on $(0,\infty)$ we will use the notation $f \sim g$ if there exist constants $m, M> 0$ such that $m g(x) \le f(x) \le M g(x)$ for all large enough $x$.
In particular, if $f\sim g$, then $f$ is integrable at infinity if and only if so is $g$.
\smallskip

In the next lemma, we relate the asymptotics of $s_\lambda$ and $q$.
\begin{lemma}
   Assume that conditions (i)--(iii) of Theorem~\ref{t3.1} are satisfied for some $\phi\in (-\frac{\pi}{2},\frac{\pi}{2})$ and $\lambda\in \C$.
   Let $\lambda\in\C\setminus Q$ where $Q = \cco\{ q(x) + rp : 0 < r < \infty,\, x\in [a,\infty) \} $ is defined in \eqref{1.2}.
   Recall that here $p=e^{-2i\phi}$.
   As in \eqref{Kiew}, we set $s_\lambda := p^{-1}(q - \lambda)$.
   If $\lim_{x\to\infty} |q(x)| = \infty$, or equivalently 
   $\lim_{x\to\infty} |s_\lambda| = \infty$, 
   then the limit
   \begin{equation}
      \label{eq:defq0}
      q_0 
      := 
      \lim_{x\to\infty} \frac{\re q(x)^{1/2}  }{ |q(x)|^{1/2} }
   \end{equation}
   exists if and only if
    \begin{equation}
      \label{eq:defs0}
      \snull 
      := 
      \lim_{x\to\infty} \frac{\re s_\lambda(x)^{1/2}  }{ |s_\lambda(x)|^{1/2} }
   \end{equation} 
   exists for one, and hence for all $\lambda\in\C\setminus Q$ and it is independent of $\lambda$.
   In this case, we have that
   \begin{equation}
       q_0 = \snull \cos\phi + \sqrt{1-\snull^2}\, \sin\phi,
       \qquad
       \snull = q_0 \cos\phi - \sqrt{1-q_0^2}\, \sin\phi.
   \end{equation}
\end{lemma}
\begin{proof}
    Clearly, 
    $\lim_{x\to\infty} |q(x)| = \infty$ if and only if
    $\lim_{x\to\infty} |s_\lambda| = \infty$ 
    for one, or equivalently, for all $\lambda\in\C$.
    Hence, assume that the limit in \eqref{eq:defs0} exists for one $\lambda\in \mathbb C$. Therefore,
 \begin{align*}
\snull := \lim_{x\to\infty} \frac{\re s_\lambda(x)^{1/2}  }{ |s_\lambda(x)|^{1/2} } =  \lim_{x\to \infty}
      \cos \arg [ s_\lambda(x)]^{1/2} 
    \end{align*} 
The expression $ [ s_\lambda(x)]^{1/2} $ 
is in the open right half-plane, see \eqref{Bischleben},
where the arg-function is continuous. Therefore, we have
the existence of 
$$
 \lim_{x\to \infty} \arg [ s_\lambda(x)]^{1/2}.
$$
Note that 
\begin{align*}
  \frac{\re q(x)^{1/2}  }{ |q(x)|^{1/2} }
  & =
  \frac{\re [ p s_\lambda(x) + \lambda ]^{1/2} }{ | ps_\lambda(x)+\lambda|^{1/2} }
  = \cos \arg [p s_\lambda(x) + \lambda]^{1/2}
  = \cos \arg \left[ p \frac{s_\lambda(x)}{|s_\lambda(x)|} + \frac{\lambda}{|s_\lambda(x)|} \right]^{1/2} 
  \\
   & = \cos \left(-\phi+\arg \left[  \frac{s_\lambda(x)}{|s_\lambda(x)|} + \frac{p^{-1}\lambda}{|s_\lambda(x)|} \right]^{1/2}
  \right) 
  \\
  & =
   \cos(-\phi) \cos \arg \left[  \frac{s_\lambda(x)}{|s_\lambda(x)|} + \frac{p^{-1}\lambda}{|s_\lambda(x)|} \right]^{1/2} -
   \sin(-\phi) \sin \arg \left[  \frac{s_\lambda(x)}{|s_\lambda(x)|} + \frac{p^{-1}\lambda}{|s_\lambda(x)|} \right]^{1/2}.
\end{align*} 

Since $|s_\lambda(x)|\to\infty$ for $x\to\infty$, this shows that
\eqref{eq:defq0} exists and that 
    \begin{align*}
    q_0=  \lim_{x\to \infty}
      \frac{\re q(x)^{1/2}  }{ |q(x)|^{1/2} }
      & = \lim_{x\to \infty}\left(
      \cos(-\phi)\cos\arg [s_\lambda(x)]^{1/2}  
      -
      \sin(-\phi)\sin\arg [s_\lambda(x)]^{1/2}  \right)
      \\
      & = 
      \snull\cos(\phi) + \sqrt{1-\snull^2}\, \sin(\phi).
   \end{align*} 
The above reasoning is valid for any $\lambda \in \mathbb C\setminus Q$. As $q_0$ does not depend on $\lambda$, the same is true for $\snull$.
A similar calculation shows that the existence of the limit in \eqref{eq:defq0} implies the existence of the limit in \eqref{eq:defs0} for any $\lambda\in \C\setminus Q$ and that the latter limit does not depend on $\lambda$.
\end{proof}

\begin{theorem}
   \label{thm:criteriaReIm}
   Assume that the conditions (i)--(iii) from Theorem~\ref{t3.1} are satisfied for some $\phi\in (-\frac{\pi}{2},\frac{\pi}{2})$ and $\lambda\in\C\setminus Q$ where $Q = \cco\{ q(x) + rp : 0 < r < \infty,\, x\in [a,\infty) \} $ is defined in \eqref{1.2}.
   Recall that here $p=e^{-2i\phi}$.
   As in \eqref{Kiew}, we set $s_\lambda := p^{-1}(q - \lambda)$.
   In addition, we assume that $\lim_{x\to\infty} |q(x)| = \infty$ and we assume that $q_0$ and $\snull$ as defined in \eqref{eq:defq0} and \eqref{eq:defs0} exist.
   If 
   $\snull = 0$ (or equivalently, $q_0 = \sin\phi)$, then $\im[(s_\lambda(x))^{1/2}]\to \pm\infty$ 
   and for any $\lambda\notin Q$, the only admissible angle $\theta$ such that \eqref{condition} holds, are $\theta = \pm \frac{\pi}{2} + 2\phi $.
   The Weyl classification of equation \eqref{pt ecuation} at $\infty$ is then

   \begin{enumerate}[label=\upshape(\roman*)] 
      \item \label{criterion:LPIa}
      limit point case~I if $\snull \neq 0$.

      \item \label{criterion:LPIb}
      limit point case~I if $\snull= 0$ and 
      $\displaystyle \frac{ e^{2\int_a^x \re [s_\lambda(t)^{1/2}] \, \rd t} }{ \im[ s_\lambda(x)^{1/2} ] }\notin L^1[a,\infty)$.

      \item \label{criterion:LPII}
      limit point case~II if $\snull = 0$, $\re[ s_\lambda^{1/2}]\notin L^1[a,\infty)$ and 
      $\displaystyle \frac{ e^{2\int_a^x \re [s_\lambda(t)^{1/2}] \, \rd t} }{ \im[ s_\lambda(x)^{1/2} ] }\in L^1[a,\infty)$.

      \item \label{criterion:LC}
      limit circle case if $\snull = 0$, $\re[ s_\lambda^{1/2}] \in L^1[a,\infty)$ and $(\im [ s_\lambda^{1/2} ])^{-1} \in L^1[a,\infty)$.

   \end{enumerate}
   The classification is independent of $\lambda\notin Q$.

\end{theorem}
\begin{proof}
   Recall that 
   if $\snull$ exists in $\R$
   then the equality in \eqref{eq:defs0} holds independently of the choice of $\lambda$.
   Let 
   $$
   f := \re[ s_\lambda^{1/2}]\quad \mbox{and} \quad g := \im [s_\lambda^{1/2}].
   $$
   Note that $f > 0$ since $\arg(s)\neq \pi$, see \eqref{Bischleben}.
   The assertion \ref{criterion:LPIa} follows immediately from ($\gamma$) and ($\delta$) in Theorem~\ref{t3.2}.
   Now let us assume that $\snull = 0$.
   Then $|g(x)| \sim |s_\lambda(x)|$ and $\frac{|f(x)|}{|g(x)|} \to 0$.
   Since $\lim_{x\to\infty} |s_\lambda(x)| = \infty$, it follows that $g(x)\to \infty$ or
   $g(x)\to -\infty$ for $x\to \infty$.
   Without restriction we may assume that $g(x)\to\infty$.
   Hence $\im s_\lambda > 0$ for $x$ large enough and 
   $\arg [p^{-1} q(x)] \to \pi$ for $x\to\infty$.
   Therefore
   \begin{align*}
   p^{-1}Q 
   &= \cco\{ p^{-1} q + r : 0 < r < \infty,\, x\in [a,\infty) \}
   = \{ z\in\C : A_{\min} \le \re z \le A_{\max} \}
   \end{align*}
   is an infinite strip parallel to the real axis
   where $A_{\min}$ and $A_{\max}$ are the minimum, respectively maximum of the bounded set $\{ \re( p^{-1} q(x) : x\in [a,\infty)\}$.
   Consequently, $Q$ is an infinite strip which has the angle $-2\phi$ with the real axis, see also Figure~\ref{fig:Q}.
   
   Let $\lambda\in\C\setminus Q$. 
   If $p^{-1}\lambda$ is in the half plane below the strip $p^{-1}Q$, then 
   $\theta = 2\phi - \pi/2$ is the only possible angle for which $e^{ i\theta}Q$ is a subset of some right half plane and $e^{i\theta}\lambda$ is to its left.
   If on the other hand $p^{-1}\lambda$ is in the half plane above the strip $p^{-1}Q$, then 
   $\theta = 2\phi + \pi/2$ is the only possible angle for which $e^{ i\theta}Q$ is a subset of some right half plane and $e^{i\theta}\lambda$ is to its left.
   
   This already shows that the first integral in \eqref{1.3} is $0$ because $\re[e^{i\theta} p] = \re [e^{\pm i\pi/2+2i\phi} e^{-2i\phi}] = 0$.

   From the asymptotics \eqref{Gesundbrunnen} for $\widehat y$ we obtain
   \begin{equation}
      \label{eq:wideyAsymptotics}
      |\widehat y(x)|^2
      \sim \left| \frac{e^{2\int_a^x f(t) + i g(t)\,\rd t}}{ (s_\lambda(x))^{1/2} }  \right|
      \sim \frac{e^{2\int_a^x f(t)\,\rd t}}{ |s_\lambda(x)|^{1/2} }
      \sim \frac{e^{2\int_a^x f(t)\,\rd t}}{ g(x) }
      \qquad \text{ for } x\to\infty.
   \end{equation}
   Hence $\widehat y\notin L^2[a,\infty)$ under the assumptions in \ref{criterion:LPIb} and consequently equation \eqref{pt ecuation} is in the limit point~I case at $\infty$.
   On the other hand, $\widehat y\in L^2[a,\infty)$ under either of the assumptions in \ref{criterion:LPII} and \ref{criterion:LC} and therefore equation \eqref{pt ecuation} is in the limit point~II or limit circle case at $\infty$.
   Which case prevails, depends on whether \eqref{1.3} is satisfied, that is, whether 
   $\re( e^{i\theta} q) |\widehat y|^2$ is integrable.
   Observe that, by \eqref{eq:wideyAsymptotics},
   \begin{align*}
      | \re( e^{i\theta} (q(x)-\lambda) )|\, |\widehat y(x)|^2
      & = | \im e^{2i\phi} (q(x)-\lambda )|\,  |\widehat y(x)|^2
      = | \im( s_\lambda(x) ) |\,  |\widehat y(x)|^2
      \\
      & = 2 f(x) g(x)\, |\widehat y(x)|^2
      \sim 2 f(x) e^{2\int_a^x f(t)\,\rd t}\ .
   \end{align*}
   The latter is integrable if and only if $f$ is integrable.
   This concludes the proof of assertions \ref{criterion:LPII} and \ref{criterion:LC}.
\end{proof}

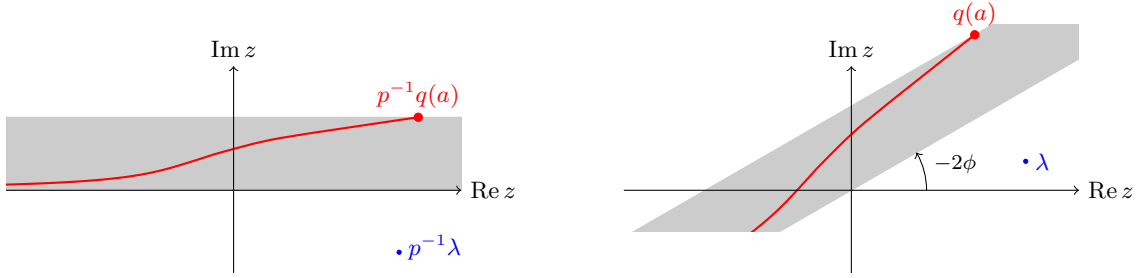
\begin{figure}
   \centering

   \tikzmath{
   \startpoint = 0.85;
   \mtwophi = 30;  
   }
   \tikzset{
   declare function={ 
   rootsRe(\x) = (\x)^(-5); 
   rootsIm(\x) = (\x)^(1.5); 
   sRe(\x) = (rootsRe(\x))^2 - (rootsIm(\x))^2;
   sIm(\x) = -2(rootsRe(\x)) * (rootsIm(\x));
   }
   }

   \begin{tikzpicture}[scale=.55, baseline=(O)] 

      \coordinate (O) at (0,0);
      \begin{scope}
	 \clip (-5.5,-1) rectangle (5.5, 4);

	 \draw [colQ, fill] (-5.5,0) rectangle ({sRe(\startpoint)+4}, {sIm(\startpoint)});

	 \draw [red, thick] plot[domain=\startpoint:2.9, smooth] ( {sRe(\x)}, {sIm(\x)});
      \end{scope}
      
     \coordinate (la) at (4,-1.5);
      \draw [blue, fill] (la) circle  [radius=.05] node [right, yshift=2] {\small$p^{-1}\lambda$};
  
      \draw[->] (-5.5,0)--(5.5,0) node [right] {\small$\re z$};
      \draw[->] (0,-2)--(0,3) node [above] {\small$\im z$};
      \draw [red, fill] ({sRe(\startpoint)}, {sIm(\startpoint)}) circle  [radius=.1] node [above] {\small$p^{-1}q(a)$};

   \end{tikzpicture}
   %
   \hspace{\fill}
   \begin{tikzpicture}[scale=.55, baseline=(O)] 
      \coordinate (O) at (0,0);

      \begin{scope}
	 \clip (-5.5,-1) rectangle (5.5, 4);

	 \draw [colQ, fill, rotate=\mtwophi] (-5.5,0) rectangle ({sRe(\startpoint)+4}, {sIm(\startpoint)});

	 \draw [red, thick, rotate=\mtwophi] plot[domain=\startpoint:2.9, smooth] ( {sRe(\x)}, {sIm(\x)});
      \end{scope}

     \begin{scope}[rotate=\mtwophi]
      \coordinate (la) at (4,-1.5);
      \draw [blue, fill] (la) circle  [radius=.05] node [right] {\small$\lambda$};
     \end{scope}
  
      \draw[->] (-5.5,0)--(5.5,0) node [right] {\small$\re z$};
      \draw[->] (0,-2)--(0,3) node [above] {\small$\im z$};
      \draw [red, fill, rotate=\mtwophi] ({sRe(\startpoint)}, {sIm(\startpoint)}) circle  [radius=.1] node [above] {\small$q(a)$};

      \coordinate (0) at (0,0);
      \coordinate (A) at (3,0);
      \coordinate (B) at (\mtwophi:2);

      \draw pic[->, draw, angle radius=1.0cm,"\footnotesize $-2\phi$" shift={(8mm,2mm)}] {angle=A--0--B};

   \end{tikzpicture}

   \caption{
   Illustration for the set $Q$ in Theorem~\ref{thm:criteriaReIm}.
   The red lines are the curves $p^{-1}q$ and $q$ respectively and the grey regions are $p^{-1} Q$ and $Q$.
   Note that $p^{-1}Q$ is a strip parallel to the real axis.
   }
   \label{fig:Q}
\end{figure}

We finish this section with various examples  which illustrate Theorem~\ref{thm:criteriaReIm}. Some other classes of examples are in
\cite[Section 5]{MC} and \cite[Chapter III Thm.\ 10.28]{EE}.

\begin{exam}
   \label{ex:polynomials}
   Consider $\Gamma$ as in \eqref{Ozuna} with $a>0$ and $\phi\in (-\pi/2, 0]$. 
   Moreover, we set $\lambda=0$ and $q(x):= e^{-2i\phi}(x^{-m} + i x^n)^2$ with 
   $m \ge 0$ and $n > 0$.
   
   \begin{enumerate}[label=\upshape(\roman*)]
   
      \item\label{ex:polynomials:LPI}
      The equation~\eqref{pt ecuation} is in the limit point I case if $0 \le m < 1$
      
      \item\label{ex:polynomials:LPII}
      The equation~\eqref{pt ecuation} is in the limit point II case if $m = 1$ and $n > 3$.

      \item\label{ex:polynomials:LC}
      The equation~\eqref{pt ecuation} is in the limit circle case if $m > 1$ and $n>1$.

   \end{enumerate}

   \begin{proof}
      Straightforward calculations show that the hypotheses from  Theorem~\ref{t3.1} and Theorem~\ref{thm:criteriaReIm} are satisfied.
      With the notation from 
      Theorem~\ref{thm:criteriaReIm} we have that 
      $$
      s_\lambda(x) = e^{2i\phi}q(x) = (x^{-m} + i x^n)^2
      \quad \mbox{hence} \quad s_\lambda(x)^{1/2} =  x^{-m} + i x^n  =: f(x) + i g(x)
      $$
    and 
      $$
      \snull = \lim_{x\to\infty} \frac{\re s_\lambda(x)^{1/2}  }{|s_\lambda(x)|^{1/2}} 
      = \lim_{x\to\infty} \frac{ x^{-m} }{|x|^{n}} = 0.
      $$
      We obtain by \eqref{Gesundbrunnen}
      \begin{equation*}
	 |\widehat y(x)|^2 \sim \frac{1}{|s_\lambda(x)|^{1/2}} e^{2\int_a^x f(t)\, \rd t}
	 \sim x^{-n} e^{2\int_a^x t^{-m}\, \rd t}
	 \sim
	 \begin{cases}
	    x^{-n} e^{2\frac{1}{1-m} x^{1-m}}, &\quad\text{if } m\neq 1,\\
	    x^{2-n} , &\quad\text{if } m= 1.
	 \end{cases}
      \end{equation*}

      Now the assertions 
      follow immediately from Theorem~\ref{thm:criteriaReIm}:
      \begin{enumerate}[label=\upshape(\roman*)]

	 \item
	 If $0 \le m < 1$ or if $m=1$ and $0<n<3$, then $\widehat y \notin L^2[a,\infty)$, hence the equation~\eqref{pt ecuation} is in the limit point~I case.

	 \item
	 If $m = 1$ and $n> 3$, then $\re[ (s_\lambda(x)^{1/2})] = x^{-1}$ does not belong to $L_1[a,\infty)$ while
	 $$
  \frac{\exp(2\int_a^x \re[ s_\lambda(t)^{1/2}]\,\rd t ) }{\im[ (s_\lambda(x))^{1/2} ]}  = \frac{x^{-2}}{x^{n}} 
  $$ 
  does. Therefore, according to Theorem~\ref{thm:criteriaReIm} (iii), the equation~\eqref{pt ecuation} is in the limit point II case.

	 \item
	 If $m > 1$ and $n>1$ then both $\re[ (s_\lambda(x)^{1/2})] = x^{-m}$ 
	 and $(\im[ (s_\lambda(x)^{1/2})])^{-1} = x^{-n}$ belong to $L_1[a,\infty)$,
	 hence the equation~\eqref{pt ecuation} is in the limit circle case according to Theorem~\ref{thm:criteriaReIm} (iv).
	 \qedhere
      \end{enumerate}
   \end{proof}
   
   If we consider the potential $q(x) = e^{-2i\phi}(ct^{-1} + it^n)^2$ with $c> 0$, then by \eqref{Gesundbrunnen}
   $|\widehat y(x)|^2 \sim  x^{2c-n}$ for $x\to\infty$.
   Hence the differential equation is in the limit point~I case if $n \le 2c+1$.
   It is in the limit point~II case if $n > 2c+1$.
   The case $m=1, n=3, c=3/4$ is (asymptotically) the example $q(x) = x^{6} - \frac{3}{2}i x^2,\ x\in (1,
   \infty)$ from Sims \cite[p. 257]{SIMS} for the limit point~II case.
\end{exam}

\begin{exam}
   \label{e4.3}
   Consider $\Gamma$ as in \eqref{Ozuna} with $\phi=0$. Moreover we set $\lambda=0$ and 
   $$
   q(x):= h(x)+iM \quad \mbox{for some}\quad M\in\R\setminus\{0\}
   $$
   and real valued $h$ with 
   $h, h'\in \textnormal{AC}_{\textnormal{loc}}[a,\infty)$ such that item (iii) of 
   Theorem~\ref{t3.1} is satisfied.
   \begin{enumerate}[label=\upshape(\roman*)]
   
      \item\label{thm:e4.3:bounded}
      If  $\|h\|_\infty < \infty$, then  \eqref{pt ecuation} is in the limit point~I case.

      \item\label{thm:e4.3:infinity}
      If $\lim\limits_{x\to\infty} h(x) = -\infty$, then \eqref{pt ecuation} is in the limit point~I case if  and only if $h^{-1/2}\notin L^{1}[a,\infty)$. 
      
      \item\label{thm:e4.3:infinityLC}
      If $\lim\limits_{x\to\infty} h(x) = -\infty$, then \eqref{pt ecuation} is in the limit circle case if  and only if $h^{-1/2}\in L^{1}[a,\infty)$. 
      
   \end{enumerate}

   \begin{proof}
      Note that
      \[\cco\{r+q(x)\, :\, x\in [a,\infty),\,  0<r<\infty\} \subseteq \{t+iM : t\in\R\},\]
      with equality in the cases \ref{thm:e4.3:infinity} and \ref{thm:e4.3:infinityLC}.
      The claim in \ref{thm:e4.3:bounded} follows directly from Theorem~\ref{criterio-s}\ref{criterio-s:b}.
      Now assume that $\lim\limits_{x\to\infty} h(x) = -\infty$.
      Note that for $x$ large enough so that $h(x)<0$ we have that 
      \[|q(x)| = (h(x)^{2}+ M^2)^{1/2},\qquad
      \arg q(x)=\pi+\arctan\left(\frac{M}{h(x)}\right).\]
      %
      %
      Therefore, for $M>0$ we have that 
      \begin{align*}
	 \re[ (h(x) + iM )^{1/2} ]
	 & = |h(x) + iM|^{1/2} \cos\left( \frac{1}{2} \arg( iM+h(x) ) \right)
     \\
	 & = |h(x) + iM|^{1/2} \cos\left( \frac{\pi}{2} + \frac{1}{2} \arctan \left( M/h(x) \right) \right)
	 \\
	 & 
	 \sim  -\frac{1}{2} |h(x)|^{1/2}  \arctan( M/h(x) )
	 \\
	 &\sim -\frac{M}{2} |h(x)|^{1/2} h(x)^{-1}
	 = \frac{M}{2} |h(x)|^{-1/2}
      \qquad\text{for } x\to\infty.
      \end{align*}
      Since $| (h(x) + iM)^{1/2}|^2 = (\re[ (h(x) + iM )^{1/2} ] )^2
	 + ( \im[ ( h(x) + iM )^{1/2} ] )^2 $,
      we obtain
      \begin{align*}
	 \im[ (h(x) + iM)^{1/2} ]
	 &\sim  |h(x)|^{1/2}
      \qquad\text{for } x\to\infty
      \end{align*}
      and consequently
      \begin{align*}
      \snull := \lim_{x\to\infty} \frac{ \re[(h(x) + iM)^{1/2}] }{|h(x) + iM|^{1/2}}
      = 0.
      \end{align*}
      The calculations for $M<0$ are analogous.
      Now \ref{thm:e4.3:infinity} and \ref{thm:e4.3:infinityLC} follow 
      from 
      Theorem~\ref{thm:criteriaReIm} \ref{criterion:LPIb} and \ref{criterion:LC}, respectively.
   \end{proof}
\end{exam}


\begin{exam}
We review an example of more relevance from theoretical physics. In \cite{bender} the authors consider for $N\geq 1$,
$N\in \mathbb N$,
   $$
   \textbf{q}(z) :=-(iz)^{N+2},
   \quad \mbox{with} \quad
   z\in \Gamma,
   $$
   where $\Gamma = \{ z\in\C : z = x e^{i\phi},\, x\in [1,\infty)\}$ is as in \eqref{Ozuna}. Then the function $q$ in \eqref{OzunaII} is given by
   $$
   q(x) 
   = -i^{N+2}x^{N+2}e^{(N+2)i\phi}
   = e^{i\pi + i(N+2)( \phi + \frac{\pi}{2} ) } x^{N+2}
   $$
   and the differential equation \eqref{pt-2} becomes
   \begin{equation}
       \label{5.1}
       v''(x) = e^{2i\phi}(e^{i\pi + i(N+2)( \phi + \frac{\pi}{2} ) } x^{N+2} - \lambda) v(x).
   \end{equation}
   We will show that \eqref{5.1} is in the limit point case~I if 
   \begin{equation}
       \label{Osterhase}
   \phi \notin \left\{\frac{(2k+1)\pi}{N+4} - \frac{\pi}{2} : k\in\{0, \dots, N+3\}\right\}
   \end{equation}
   and else in the limit circle case.
   This was already shown in \cite{LebTr} by a somewhat different argument.
   
   First note that 
   $$
   e^{2i\phi} q(x) 
   = e^{ i(N+4)( \phi + \frac{\pi}{2} ) } x^{N+2}.
   $$
   If we set 
   \begin{equation*}
       \psi_N := (N+4) (\phi + \textstyle\frac{\pi}{2} )
   \end{equation*}
   then
   \begin{align}
   \nonumber
   Q 
   =
   \cco\{ q(x) + r e^{-2i\phi} :\, r>0,\, x\ge 1\}
   &= e^{-2i\phi}\cco\{ e^{2i\phi}q(x) + r :\, 0 < r < \infty,\, x\in [1,\infty)\}
   \\
   \label{MajorTom}
   &= e^{-2i\phi}\cco\{ t e^{i\psi_N} + r :\, 0 < r < 
   \infty,\, t\in [1,\infty)\}.
   \end{align}
   Hence $Q$ is
   \begin{itemize}
       \item the halfline $\{ t e^{-2i\phi} : t\in [1,\infty)\}$ if  $\psi_N \in 2\pi \Z$, 
       that is, if 
       $\phi = \frac{2k\pi}{N+4} - \frac{\pi}{2}$ for some $k\in\Z$;
       
       \item the line $\{ t e^{-2i\phi} : t\in \R\}$ if  $\psi_N \in \pi + 2\pi \Z$, 
       that is, if 
       $\phi = \frac{(2k+1)\pi}{N+4} - \frac{\pi}{2}$ for some $k\in\Z$;

       \item a sector with centre at $e^{-2i\phi + i\psi_N}$ and angle smaller than $\pi$ in all other cases.
   \end{itemize}

   Clearly, $q$ satisfies (i) in Theorem~\ref{t3.1}
   and for any $\lambda\in \C\setminus Q$ conditions (ii) and (iii) of that theorem are satisfied, too, because for large enough $x$
   \begin{align*}
       \left| 
       \frac{ 5 (e^{2i\phi} q'(x))^2 }{ 16( e^{2i\phi} (q(x) - \lambda) )^{5/2}}
       \right|
       \le  x^{ - \frac{N}{2}-3 }
       \quad \mbox{and} \quad
       \left| 
       \frac{ e^{2i\phi} q''(x) }{ 4( e^{2i\phi} (q(x) - \lambda) )^{3/2}}
       \right|
       \le x^{-N/2 -3},
   \end{align*}
   which shows the integrability condition in (iii).
   As in \eqref{Kiew} we set
   $$
   s_\lambda(x):= e^{2i\phi} (q(x) - \lambda)
   = e^{i\psi_N } x^{N+2} - e^{2i\phi}\lambda.
   $$
   It is easy to see that the limit in \eqref{eq:defs0} exists and is 
   \begin{align*}
   \snull 
   = \lim_{x\to\infty} \frac{ \re[ s_\lambda(x)^{1/2}] }{ |s_\lambda(x)|^{1/2} }
   = \re e^{ \frac{i\psi_N}{2} }.
   \end{align*}
   
   Therefore, $\sigma \neq 0$ if and only if $\psi_N \notin \pi + 2\pi\Z$, that is, if $\phi$ satisfies \eqref{Osterhase}.
   For those $\phi$, by Theorem~\ref{thm:criteriaReIm} \ref{criterion:LPIa},
   the differential equation is in the limit point case~I.
   If on the other hand
    \begin{equation*}
       \label{Osterhase2}
   \phi =\frac{(2k+1)\pi}{N+4} - \frac{\pi}{2} \quad 
   \mbox{for some} \quad    k=0, \dots, N+3,
   \end{equation*}
    then 
   $s_\lambda(x)= -x^{N+2} - e^{2i\phi}\lambda$
   and the differential equation \eqref{5.1} becomes 
   \begin{equation*}
       v''(x) = (-x^{N+2} - e^{2i\phi}\lambda) v(x)
   \end{equation*}
   which is of the type discussed in Example~\ref{e4.3} \ref{thm:e4.3:infinityLC} with $h(x) = -x^{N+2} - \re(e^{2i\phi}\lambda)$.
   Therefore, \eqref{5.1} is in the limit circle case.
\end{exam}

\begin{center}
{\bf Acknowledgements.}
\end{center}
The authors are grateful for the financial support by 
the German Academic Exchange Service 
(Programm des Projektbezogenen Personenaustausches Kolumbien 2023-2025), Minciencias Colombia (Movilidad acad\'emica con Europa, 836-2019 and 923-2022)  and the Faculty of Sciences of University of the Andes (Proyecto semilla INV-2021-126-2291).


\end{document}